\documentclass[review,onefignum,onetabnum]{siamart190516}
\newtheorem{example}{Example}[section]
\usepackage[shortlabels]{enumitem}
\pdfoutput=1
\usepackage{amssymb}

\usepackage{lipsum}
\usepackage{amsfonts}
\usepackage{graphicx}
\usepackage{epstopdf}
\usepackage{algorithmic}
\ifpdf
\DeclareGraphicsExtensions{.eps,.pdf,.png,.jpg}
\else
\DeclareGraphicsExtensions{.eps}
\fi


\newsiamremark{remark}{Remark}
\newsiamremark{hypothesis}{Hypothesis}
\crefname{hypothesis}{Hypothesis}{Hypotheses}
\newsiamthm{claim}{Claim}

\headers{HIGH-ORDER DLQR and ILQR METHODS}{Zuodi Xie and Tieqiang Gang}

\title{DISCRETE LQR and ILQR METHODS BASED ON HIGH ORDER RUNGE-KUTTA METHODS}

\author{Zuodi Xie\thanks{Xiamen University, Xiamen 
		(\email{jordyxie@stu.xmu.edu.cn})}
	\and Tieqiang Gang\thanks{Corresponding author. Xiamen University, Xiamen (\email{gangtq@xmu.edu.cn})}
}

\usepackage{amsopn}

\ifpdf
\hypersetup{
  pdftitle={DISCRETE LQR and ILQR METHODS BASED ON HIGH ORDER RUNGE-KUTTA METHODS},
  pdfauthor={Zuodi Xie and Tieqiang Gang}
}
\fi


\externaldocument{ex_supplement}


\begin{document}
\maketitle

\begin{abstract}
In this paper, discrete linear quadratic regulator (DLQR) and iterative linear quadratic regulator (ILQR) methods based on high-order Runge-Kutta (RK) discretization are proposed for solving linear and nonlinear quadratic optimal control problems respectively. As discovered in [W. Hager, \textit{Runge-Kutta method in optimal control and the discrete adjoint system,} Numer. Math., 2000, pp. 247-282], direct approach with RK discretization is equivalent with indirect approach based on symplectic partitioned Runge-Kutta (SPRK) integration. In this paper, we will reconstruct this equivalence by the analogue of continuous and discrete dynamic programming. Then, based on the equivalence, we discuss the issue that the internal-stage controls produced by direct approach may have lower order accuracy than the RK method used. We propose order conditions for internal-stage controls and then demonstrate that third or fourth order explicit RK discretization cannot avoid the order reduction phenomenon. To overcome this obstacle, we calculate node control instead of internal-stage controls in DLQR and ILQR methods. And numerical examples will illustrate the validity of our methods. Another advantage of our methods is high computational efficiency which comes from the usage of feedback technique. In this paper, we also demonstrate that ILQR is essentially a quasi-Newton method with linear convergence rate. 
\end{abstract}

\begin{keywords}
  direct approach for optimal control, symplectic partitioned Runge-Kutta method, LQR, ILQR
\end{keywords}

\begin{AMS}
  93C15, 65L06, 49M25, 49N10
\end{AMS}

\section{Introduction}
The numerical approaches for solving optimal control problem (OCP) are generally divided into two branches: one is indirect approach which aims at solving the canonical equations derived from Pontryagin Maximum Principle (PMP); the other is direct approach which suggests discretizing the state equations and cost function directly and transform the original problem into a finite-dimensional optimization problem \cite{Betts1998SurveyON}. In this paper, we restrict the cost function as quadratic. In the framework of direct approach, this paper aims at two tasks:
\par
\textbf{Task (1)}: How to make sure the calculated control values have the same accuracy order as the RK discretization?
\par
\textbf{Task (2)}: How to employ linear feedback technique (specifically, DLQR and ILQR methods) to solve the optimization problem after high-order RK discretization? 
\par
During the last years, much work has emerged in studying the convergence properties of RK discretization in OCP (\cite{bonnans2006computation,Dontchev2000SecondOrderRA,Hager2000RungeKuttaMI,reddien1979collocation,2012Consistent}). Though the convergence properties of RK methods applied in ordinary differential equations have been studied deeply, the analysis in OCP is usually more complicated. As exemplified in \cite{Dontchev2000SecondOrderRA,Hager2000RungeKuttaMI}, some RK discretization may not converge when solving OCP. In addition, when control or state is under constraints, stricter algebraic conditions must be satisfied by RK methods to ensure convergence or high accuracy (related work can be seen in \cite{Dontchev2000SecondOrderRA,2012Consistent}). In the non-constraint case, there exists an equivalence between direct approach based on RK discretization and indirect approach with corresponding SPRK integration \cite{Dontchev2000SecondOrderRA,SanzSerna2016SymplecticRS}. Based on this fact, Hager \cite{Hager2000RungeKuttaMI} proposed the notation of RK method’s order in OCP and computed the order conditions up to four. Then, Bonnans and Laurent-Varin \cite{bonnans2006computation} extended the order conditions to 7. Under the smooth and coercivity assumptions, a convergence order analysis for RK method in OCP is given in \cite{Hager2000RungeKuttaMI}.
\par
When applying a $s$-stage RK discretization in direct approach, one will obtain $s$ internal-stage controls. The main obstacle to increase the accuracy by utilizing high-order RK method, as discovered in \cite{Hager2000RungeKuttaMI}, is that order reduction phenomenon may happen in these internal control values. Though this problem has been referred in some papers (see \cite{albi2019linear,almuslimani2021explicit,sandu2007consistency}), deep analysis is still lack as we know. In this paper, we will give the convergence analysis of internal-stage controls and
highlight calculating node control to overcome this obstacle.
\par
After RK discretization, the OCP with quadratic objective function and linear state equations will be transformed into a discrete linear quadratic problem which can be solved by the linear feedback technique called DLQR method in this paper. When the state equations in OCP and the corresponding constraints in optimization problem are nonlinear, ILQR method, first proposed by Li and Todorov \cite{Li2004IterativeLQ}, can be employed. As a simple version of differential dynamic programming (DDP) method \cite{Mayne1970Differential}, ILQR uses iterative linearization of nonlinear system and searches for control modifications by solving discrete linear quadratic regulator problems \cite{Li2004IterativeLQ}. The prominent advantage of ILQR over other optimization technique such as sequential quadratic programming (SQP) \cite{Han1976SuperlinearlyCV} is that the search direction for iteration is obtained by the linear feedback technique, which saves the computational cost \cite{xie2017differential}. Though ILQR method cannot achieve quadratic convergence as DDP, the formal only requires calculation of the first order derivative of dynamical system and the matrix operation in which takes less computational payment \cite{giftthaler2018family}. As an efficient method to solve discrete optimal control problem, ILQR has been successfully applied in the fields of robotic and stochastic control \cite{erez2012infinite,Li2007IterativeLM,Todorov2005AGI}.\par
As remarked in \cite{murray1984differential}, technique like ILQR, in which state equations are linearized, cannot attain quadratic convergence. In \cite{giftthaler2018family}, the convergence rate of ILQR is claimed as linear but a detailed analysis is lack there. 
\subsection{Overview and notations}
The rest of this paper is organized as following.
\par
In Section \ref{section2}, we will reconstruct the equivalence between direct and indirect approaches within a dynamic programming viewpoint and discuss an important prerequisite to ensure this equivalence. In Section \ref{section3}, we deduce order conditions for internal-stage controls and reveal that some common RK discretization cannot avoid the order reduction problem. In Section \ref{section4}, we propose DLQR method based on high-order RK discretization for solving the linear quadratic OCP. Section \ref{section5} aims at solving the nonlinear quadratic OCP with ILQR method and gives a convergence analysis for the iterative progress of ILQR. The numerical examples will be given in Sections \ref{section4} and \ref{section5} to show the advantages of our methods. 
\par
In this paper, $x\in \mathbb{R}^n$ is regarded as column vector and equipped with the standard Euclidean norm $||x||=(x^Tx)^{1/2} $. For several vectors $x_1,...,x_m$, we use the notation $(x_1,...,x_m)=[x_1^T,...,x_m^T]^T$ as the composed vector. The $n$-dimension identity matrix is denoted as $I_n$. We introduce the following notation 
\begin{equation}\nonumber
	\begin{bmatrix}
		A(1) &  &0 \\
		& \ddots  & \\
		0 &   &A(m)
	\end{bmatrix}
\end{equation}
to signify the diagonal of a quasi-diagonal matrix composed of $m\in \mathbb{R}$ identical matrices $A$.
\par
Let $n_i$ be any positive integer $(i=1,2,3)$. For a differentiable function with two variables $F:\mathbb{R}^{n_1} \times \mathbb{R}^{n_2} \to \mathbb{R}^{n_3}$, $D_xF(x,y)$ (or $D_yF(x,y)$) denotes the partial derivative. The transpose of $D_xF(x,y)$ is denoted as $D_x^TF(x,y)$. For a differentiable function with single variable $G: \mathbb{R}^{n_1} \to \mathbb{R}^{n_2}$, $G'(x)$ and $G''(x)$ denote the first and second order derivatives respectively. Specially, when $n_2=1$, $G'(x)$ and $G''(x)$ signify the gradient and Hesse matrix of $G$ at $x$ respectively.
\par
The coefficients of a $s$-stage partitioned RK method are displayed in Table \ref{RK method}. For any $i \in \left \{ 1,...,s \right \} $, the coefficients $c_i= {\textstyle \sum_{j=1}^{s}}a_{ij}$ and $\bar c_i={\textstyle \sum_{j=1}^{s}}\bar a_{ij}$.
\begin{table}[]\label{RK method}\centering
	\caption{Coefficients of $s$-stage partitioned RK method}
	\renewcommand{\arraystretch}{1.5}
	\begin{tabular}{c|cccccc|ccc}
		$c_1$    & $a_{11}$ & $\cdots$ & $a_{1s}$ &  &  & $\bar c_1$ & $\bar a_{11}$ & $\cdots$ & $\bar a_{1s}$ \\
		$\vdots$ & $\vdots$ &          & $\vdots$ &  &  & $\vdots$   & $\vdots$      &          & $\vdots$      \\
		$c_s$    & $a_{s1}$ & $\cdots$ & $a_{ss}$ &  &  & $\bar c_s$ & $\bar a_{s1}$ & $\cdots$ & $\bar a_{ss}$ \\ \cline{1-4} \cline{7-10} 
		& $b_1$    & $\cdots$ & $b_s$    &  &  &            & $\bar{b}_1$    & $\cdots$ & $\bar{b}_s$   
	\end{tabular}
\end{table}

\section{The equivalence of direct approach and SPRK discretization for PMP canonical equations}\label{section2}	
As deduced in \cite{Hager2000RungeKuttaMI}, direct approach based on RK discretization is eventually equivalent with the corresponding SPRK integration for PMP canonical equations. In this section, we will reconstruct the equivalence by the analogue of continuous and discrete dynamic programming. The main purposes include two aspects: one is to reveal this equivalence in a more natural way; the other is to make some theoretical preparations for DLQR and ILQR methods introduced in Section \ref{section4} and \ref{section5}.
\par
Consider the unconstrained optimal control problem:\\
\textbf{Problem 2.1}
\begin{equation}\label{2problem}
	\min_{u}J= \min_{u}\int_{0}^{t_f} C(x(t),u(t))dt+\Phi (x(t_f))
\end{equation}
subject to
\begin{equation}\label{2state equation}	
	\frac{\mathrm{d} x}{\mathrm{d} t}=f(x,u),\ x(0)=a,\ t\in [0,t_f],
\end{equation}
where state variable $x(t)\in \mathbb{R}^n$, control $u(t)\in \mathbb{R}^m$, $f:\mathbb{R}^n\times \mathbb{R}^m\to \mathbb{R}^n$ and $J$ denotes the cost function with running cost $C:\mathbb{R}^n\times \mathbb{R}^m\to \mathbb{R}$ and terminal cost $\Phi:\mathbb{R}^n\to \mathbb{R}$. We assure suitable conditions are satisfied to ensure the existence, uniqueness and smoothness of the solution to this problem (More details can be seen in  \cite{Dontchev1995OptimalitySA,Hager2000RungeKuttaMI,Junge2005DiscreteMA}).
\par
To study this problem in a continuous form, we first introduce the value function
\begin{equation}\label{2 value function}
	V(t,x) \triangleq \min_{u_{[t,t_f]} } \left\{ \int_{t}^{t_f} C(x(s),u(s))ds+\Phi (x(t_f))     \right\}
\end{equation}
where $x(s)$ $(t\le s\le t_f)$, satisfying $x(t)=x$, is the state trajectory under control $u(s)$ $(t\le s\le t_f)$. Then according to the principle of optimality (refer to Section 5.1.2 in \cite{Liberzon2012CalculusOV}), we have
\begin{equation} \label{2 principle op}
V(t,x)=\min_{u_{[t,t+\Delta t]} }  \left \{ \int_{t}^{t+\Delta t} C(x(s),u(s)) ds +V(t+\Delta t,x(t+\Delta t)) \right \}      
\end{equation}
for every $(t,x)\in [0,t_f)\times \mathbb{R}^n$ and $\Delta t \in (0,t_f-t]$. We additionally assume the value function is smooth enough, then it can be determined by the Hamilton-Jacobi-Bellman (HJB) equation:
\begin{equation}\label{2 HJB}
	-V_t(t,x)=\min_{u}\left \{ C(x,u)+D_xV(t,x)\cdot f(x,u) \right \}  
\end{equation}
with boundary condition $V(t_f,x)=\Phi(x(t_f))$. As mentioned in Section 7.1.2 of \cite{Liberzon2012CalculusOV}, the characteristics of this partial differential equation are actually the canonical equations deduced from the maximum principle, namely,
\begin{subequations}
\begin{align}
	\frac{dx}{dt}&=f(x,u),\ x(0)=a, \label{2 cono1}\\
	\frac{dp}{dt}&=-D_x^Tf(x,u)p-D_xC(x,u) \label{2 cono2},\ p(t_f)= \Phi '( x(t_f) ),  \\
	D_u^T&f(x,u)p+D_uC(x,u)=0 \label{2 cono3}
\end{align}
\end{subequations}   
where the costate variable $p(t)$, $t\in [0,t_f]$, satisfies
\begin{equation}\label{2pk=DxV}
p(t)=D_x V(t,x).
\end{equation}
 Suppose the equation \eqref{2 cono3} implicitly determines a function $u=\hat{u}(x,p)$, then we can transform the canonical equations \eqref{2 cono1}-\eqref{2 cono3} into an autonomous form
\begin{subequations}
	\begin{align}
		\frac{dx}{dt}&=f(x,\hat{u}( x,p) ),\ x(0)=a, \label{2 cono4}\\
		\frac{dp}{dt}&=-D_x^Tf(x,\hat{u}( x,p))p-D_xC(x,\hat{u}( x,p)),\ p(t_f)= \Phi ' ( x(t_f) ) . \label{2 cono5}  
	\end{align}
\end{subequations} 
\par
To solve \textbf{Problem 2.1} numerically in the framework of direct approach, we construct the discrete time sequence $ \left \{ t_k=kh,\ k=0,\cdots ,t_f/h \right \} $ and utilize a $s$-stage RK method with coefficients $a_{ij},b_i$ for discretization, then we can get the following optimization problem:\\
\textbf{Problem 2.2} 
\begin{equation}\label{2dOCP}
	\min_{u_{ki}} J_{d}=\min_{u_{ki}} 
	\left\{ h{\textstyle \sum_{k=0}^{N-1}}  {\textstyle \sum_{i=1}^{s}}b_iC(x_{ki},u_{ki}) +\phi (x_N) \right\}	
\end{equation}
subject to 
\begin{subequations}
	\begin{align}
		x_{k+1}&=x_{k}+h {\textstyle \sum_{i=1}^{s}}b_{i}f(x_{ki},u_{ki}), &k&=0,...,N-1, \label{2dstate1} \\
		x_{ki}&=x_{k}+h {\textstyle \sum_{j=1}^{s}}a_{ij}f(x_{kj},u_{kj}), &k&=0,...,N-1,\ i=1,...,s. \label{2dstate2}
	\end{align}
\end{subequations}
Similar to the continuous case, we define the discrete value function for \textbf{Problem 2.2} as following
\begin{equation} \label{2d value function}
	V_k(x_k)=\min_{u_{li}}\left \{ h {\textstyle \sum_{l=k}^{N-1}} {\textstyle \sum_{i=1}^{s}}b_iC(x_{li},u_{li})+\Phi(x_N)   \right \}  ,\ 0\le k \le N-1,
\end{equation}
and $V_N(x_N)=\Phi(x_N).$ Then the discrete HJB equation with internal-stage controls is given by \cite{Ohsawa2011DiscreteHT}:
\begin{equation}\label{2d HJB}
	\begin{aligned}
	V_k(x_k) =& \min_{u_{k1},...,u_{ks}} \{	h{\textstyle \sum_{i=1}^{s}}b_iC(x_{ki},u_{ki})+V_{k+1}(x_{k+1})
	 \}\\
	=&\min_{u_{k1},...,u_{ks}} \left \{ h {\textstyle \sum_{i=1}^{s}}b_iC(x_{ki},u_{ki})+V_{k+1}(x_{k}+h {\textstyle \sum_{i=1}^{s}}b_if(x_{ki},u_{ki}))   \right \} , 
	\end{aligned}
\end{equation}
$0\le k \le N-1$, with boundary condition $V_N(x_N)=\Phi (x_N)$.
\par
With all the preparations above, now we can introduce the main deductive result of this section and the proof is left in Appendix \ref{App for section2}.
\begin{proposition}\label{2 key proposition}
If $b_i>0\ (1\le i \le s)$ and the time step $h$ is small enough, then the solution ${\left \{ u_{ki}:0\le k\le N-1,\ 1\le i\le s \right \} }$ of \textbf{Problem 2.2}, can be determined by the following discrete system:
\begin{subequations}
	\begin{align}
		x_{k+1}&=x_k+h {\textstyle \sum_{i=1}^{s}}b_i f(x_{ki},u_{ki}), \label{2 ad1} \\
		x_{ki}&=x_k+ h{\textstyle \sum_{j=1}^{s}}a_{ij}   f(x_{kj},u_{ki} ), \label{2 ad2} \\
		p_{k+1}&=p_k-h {\textstyle \sum_{i=1}^{s}}b_i(D_x^Tf(x_{ki},u_{ki})p_{ki}+D_xC(x_{ki},u_{ki}) ),\label{2 ad3} \\
		p_{ki}&=p_k-h {\textstyle \sum_{i=j}^{s}}\bar{a}_{ij}(D_x^Tf(x_{kj},u_{ki})p_{kj}+D_xC(x_{kj},u_{ki}) ),\label{2 ad4} 
	\end{align}
\end{subequations}
$0\le k\le N-1$, $1\le i\le s$, with boundary conditions $x_0=a$ and $p_N= \Phi'(x_N)$, where
\begin{subequations}
\begin{align}
	u_{ki}&=\hat{u}(x_{ki},p_{ki}), \label{2uki} \\
	p_k&= (V_k)'(x_k), \label{2 pk} \\
	\bar{a}_{ij}&=b_j-\frac{b_ja_{ji}}{b_i} \label{2 aij}.
\end{align}
\end{subequations} 

\end{proposition}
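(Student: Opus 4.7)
The plan is to argue by induction on $k$ (going backward from $k=N$) using the discrete HJB equation \eqref{2d HJB}, exactly mirroring the continuous characteristics argument that led from \eqref{2 HJB} to \eqref{2 cono1}--\eqref{2 cono3}. At each step, rather than minimizing the right-hand side of \eqref{2d HJB} over $(u_{k1},\dots,u_{ks})$ directly, I would introduce Lagrange multipliers $\mu_{k1},\dots,\mu_{ks}\in\mathbb{R}^n$ for the implicit stage constraints \eqref{2dstate2} and write the Lagrangian
\begin{equation}\nonumber
\mathcal{L}_k = h\sum_{i=1}^{s} b_i C(x_{ki},u_{ki}) + V_{k+1}\!\Bigl(x_k+h\sum_{i=1}^{s} b_i f(x_{ki},u_{ki})\Bigr) + \sum_{j=1}^{s}\mu_{kj}^{T}\!\Bigl(x_k+h\sum_{l=1}^{s}a_{jl}f(x_{kl},u_{kl})-x_{kj}\Bigr).
\end{equation}
With $p_{k+1}\triangleq V_{k+1}'(x_{k+1})$, the stationarity conditions $\partial\mathcal{L}_k/\partial x_{ki}=0$ and $\partial\mathcal{L}_k/\partial u_{ki}=0$ give, respectively,
\begin{equation}\nonumber
\mu_{ki} = hb_i D_x^T C(x_{ki},u_{ki}) + h\bigl(b_i p_{k+1} + \textstyle\sum_{j}a_{ji}\mu_{kj}\bigr)\!\cdot D_x^{T}f(x_{ki},u_{ki})\,/\,(\text{matched appropriately}),
\end{equation}
and an analogous relation in the $u$-variable.

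Next I would define the internal costates by
\begin{equation}\nonumber
b_i p_{ki} \triangleq b_i p_{k+1} + \sum_{j=1}^{s} a_{ji}\mu_{kj},
\end{equation}
which (using $b_i>0$) is a legitimate definition, and show by substitution that the $u$-stationarity condition collapses to $D_u^{T}f(x_{ki},u_{ki})p_{ki}+D_uC(x_{ki},u_{ki})=0$, i.e.\ $u_{ki}=\hat u(x_{ki},p_{ki})$, which yields \eqref{2uki}. Substituting the same definition back into the $x$-stationarity condition will give $\mu_{ki}=hb_i\bigl[D_x^{T}C(x_{ki},u_{ki})+D_x^{T}f(x_{ki},u_{ki})p_{ki}\bigr]$. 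The envelope theorem applied to $V_k(x_k)=\mathcal{L}_k$ at the optimum produces $p_k\triangleq V_k'(x_k)=p_{k+1}+\sum_j\mu_{kj}$, and plugging in the expression for $\mu_{kj}$ just obtained recovers \eqref{2 ad3}.

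It then remains to massage the definition of $p_{ki}$ into the explicit form \eqref{2 ad4}. Solving for $p_{k+1}$ via \eqref{2 ad3} and inserting into $p_{ki}=p_{k+1}+h\sum_{j}(a_{ji}b_j/b_i)\bigl[D_x^{T}f(x_{kj},u_{kj})p_{kj}+D_xC(x_{kj},u_{kj})\bigr]$ gives
\begin{equation}\nonumber
p_{ki}=p_k - h\sum_{j=1}^{s}\Bigl(b_j-\tfrac{b_j a_{ji}}{b_i}\Bigr)\!\bigl[D_x^{T}f(x_{kj},u_{kj})p_{kj}+D_xC(x_{kj},u_{kj})\bigr],
\end{equation}
which, after identifying $\bar a_{ij}=b_j-b_ja_{ji}/b_i$ in accordance with \eqref{2 aij}, is exactly \eqref{2 ad4}. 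Combined with \eqref{2 ad1} (immediate from \eqref{2dstate1}) and \eqref{2 ad2} (immediate from \eqref{2dstate2}), this closes the system.

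The delicate point I expect is justifying the small-$h$ hypothesis. Three places need it: (i) existence of $\hat u$ as a smooth implicit function of $(x,p)$, inherited from the continuous case; (ii) invertibility of the Jacobian of the coupled system \eqref{2 ad1}--\eqref{2 ad4} so that $(x_{ki},p_{ki},u_{ki})$ are well-defined functions of $(x_k,p_k)$---this is a Neumann-series argument treating $h$ as a small perturbation of the identity; and (iii) preservation, by induction in $k$, of the smoothness and strict-convexity of $V_k$ that the envelope-theorem step tacitly requires. The positivity $b_i>0$ is indispensable both for dividing by $b_i$ when defining $p_{ki}$ and $\bar a_{ij}$, and for the second-order sufficiency of the inner minimization. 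The rest of the argument is algebraic bookkeeping that rearranges the KKT system into the stated symplectic partitioned form.
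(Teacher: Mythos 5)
Your proposal is correct and follows essentially the same route as the paper's own proof in Appendix A: the paper likewise introduces multipliers $\varphi_{ki}$ for the stage constraints inside the discrete HJB recursion, defines $p_{ki}=p_{k+1}+\sum_j \tfrac{a_{ji}}{b_i}\varphi_{kj}$ (your $b_i p_{ki}=b_i p_{k+1}+\sum_j a_{ji}\mu_{kj}$), collapses the $u$-stationarity to $u_{ki}=\hat u(x_{ki},p_{ki})$, obtains $p_k=p_{k+1}+\sum_i\varphi_{ki}$ by differentiating the value recursion, and back-substitutes to reach \eqref{2 ad3}--\eqref{2 ad4} with $\bar a_{ij}=b_j-b_ja_{ji}/b_i$. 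The only difference is that you spell out the small-$h$/well-posedness issues, which the paper simply delegates to Hager's reference.
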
 

\begin{remark}
The conclusion in Proposition \ref{2 key proposition} also can be obtained by introducing Lagrangian multiplies for state constraints \eqref{2dstate1} and \eqref{2dstate2} and then deducing the KKT conditions for \textbf{Problem 2.2}, as in \cite{Hager2000RungeKuttaMI}. In fact, according to \eqref{2 pk}, the Lagrangian multiplies is equivalent to the gradient of discrete value function, which can be taken as the discrete version of \eqref{2pk=DxV} in continuous setting. 
\end{remark} \par
Note that the expression \eqref{2 aij} is actually the symplectic condition for partitioned RK methods (see Theorem 4.6 in chapter VI of \cite{Hairer2004GeometricNI}), we can conclude that the solution of direct approach based on RK discretization also can be reached by the corresponding SPRK integration of canonical systems \eqref{2 cono4} and \eqref{2 cono5}. Consequently, it is natural to introduce the following definition which was first proposed in \cite{Hager2000RungeKuttaMI} :    
\begin{definition}
A RK method with coefficients $a_{ij},b_{i}>0$ is of order r in OCP, if the corresponding SPRK method with coefficients $a_{ij}$, $\bar{a}_{ij}=b_j-b_ja_{ji}/b_i$, $b_{i}=\bar{b}_i$ has order $r$.
\end{definition}

\par   
In Table \ref{2method A}-\ref{2method C}, we present three explicit RK methods (notated as A, B and C methods) and their respective adjoint part. One can verify that A, B and C methods are of order 2,3 and 4 in OCP, respectively (refer to chapter III.2 in \cite{Hairer2004GeometricNI} or Table 1 in \cite{Hager2000RungeKuttaMI}).      
	\begin{table}[H] \centering
		\caption{A Method (left) and its adjoint part (right)}
		\begin{tabular}{c|ccccc|cc}
			0 &     &     &  &  & 0 & 1/2 & -1/2 \\
			1 & 1   &     &  &  & 1 & 1/2 & 1/2  \\ \cline{1-3} \cline{6-8} 
			& 1/2 & 1/2 &  &  &   & 1/2 & 1/2 
		\end{tabular}
		\label{2method A}
	\end{table}
	\begin{table}[H] \centering
				\caption{B Method (left) and its adjoint part (right)}
	\begin{tabular}{c|cccccc|ccc}
		0   &     &     &     &  &  & 0   & 1/6 & -4/3 & 7/6  \\
		1/2 & 1/2 &     &     &  &  & 1/2 & 1/6 & 2/3  & -1/3 \\
		1   & -1  & 2   &     &  &  & 1   & 1/6 & 2/3  & 1/6  \\ \cline{1-4} \cline{7-10} 
		& 1/6 & 2/3 & 1/6 &  &  &     & 1/6 & 2/3  & 1/6 
	\end{tabular}
		\label{2method B}
	\end{table}

	\begin{table}[H] \centering
	\caption{C Method (left) and its adjoint part (right)}
		\begin{tabular}{c|ccccccc|cccc}
			0   &     &     &     &     &  &  & 0   & 1/6 & -2/3 & 1/3  & 1/6  \\
			1/2 & 1/2 &     &     &     &  &  & 1/2 & 1/6 & 1/3  & -1/6 & 1/6  \\
			1/2 & 0   & 1/2 &     &     &  &  & 1/2 & 1/6 & 1/3  & 1/3  & -1/3 \\
			1   & 0   & 0   & 1   &     &  &  & 1   & 1/6 & 1/3  & 1/3  & 1/6  \\ \cline{1-5} \cline{8-12} 
			& 1/6 & 1/3 & 1/3 & 1/6 &  &  &     & 1/6 & 1/3  & 1/3  & 1/6 
		\end{tabular}
		\label{2method C}
	\end{table}
\par
Though the equivalence revealed by Proposition \ref{2 key proposition} is indeed an elegant result, one restriction should be stated here. As is shown in Figure \ref{2 internal control}, the internal-stage control $u_{ki}$ is usually taken as the final result to approximate the optimal control at time $t_k+c_i h$. However, when the coefficients of the RK method satisfy $c_i=c_j, i\ne j$ (C method in Table \ref{2method C}, with coefficients $c_2=c_3=1/2$, for example), there may be two different internal-stage controls for the same time $t_k+c_i h$. Therefore, it seems to be more reasonable to add an \textit{internal-stage overlap constraint}  $u_{ki}=u_{kj}$. However, under this kind of constraint, the equivalence stated in Proposition \ref{2 key proposition} will not hold any longer and two difficulties will arise in practice: on the one hand, more stringent algebraic conditions must be satisfied by the RK method to ensure the convergence or high-order accuracy (refer to \cite{Dontchev2000SecondOrderRA}); on the other hand, the optimization problem after RK discretization will become more elaborate and the linear feedback technique proposed in Section \ref{section4} and \ref{section5} fails to work. Fortunately, analysis in the next section will show that we can achieve high-order accuracy without considering the \textit{internal-stage overlap constraint}.    
\begin{figure}[H]\label{2 internal control}
	\centering
	\includegraphics[width=1\linewidth]{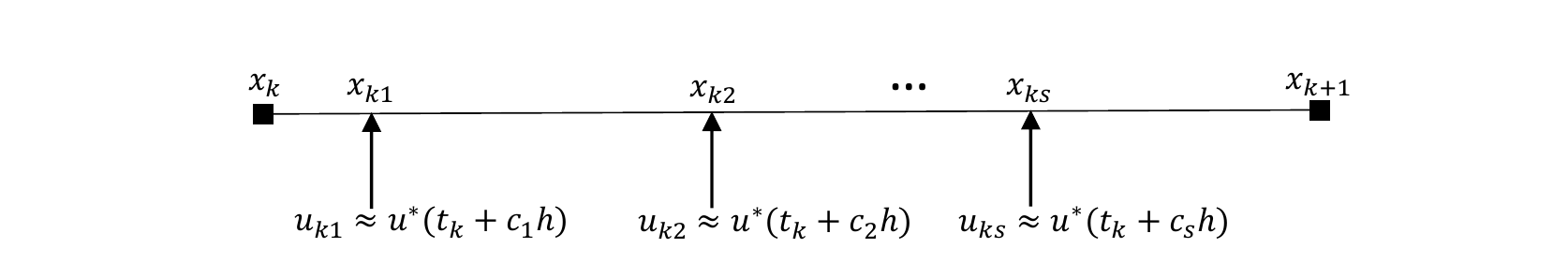}
	\caption{Internal-stage controls $u_{k1},...,u_{ks}$ in the time interval $[t_k,t_{k+1}]$.}
\end{figure}

\section{Error estimate and internal-stage control problem}\label{section3} Because of the equivalence stated in Section \ref{section2}, to analysis the convergence of a direct approach with RK discretization, it suffices to study the numerical integration for the canonical equations with corresponding SPRK method. By doing so, Hager proved the following theorem.      
\begin{theorem}[Error estimate for node values \cite{Hager2000RungeKuttaMI}] \label{3Hager}
	Suppose a $s$-stage RK method of order r in OCP is used in direct approach for solving problem \eqref{2problem}, then there exists unique solution $x_k,\ p_k$ for discrete system \eqref{2 ad1}-\eqref{2 ad4} when time step $h$ is small enough, such that
	\begin{equation}\label{3error1}
		\begin{aligned}
			\max_{0\le k\le N} \left || x_k-x^*(t_k) \right ||+ \left || p_k-p^*(t_k) \right ||+\left || \hat{u}(x_k,p_k)-u^*(t_k)\right ||
			\le O(h^r),
		\end{aligned}
	\end{equation}
	where $(x^*(t),p^*(t))$ is the exact solution of canonical equations \eqref{2 cono4} and \eqref{2 cono5}, and $u^*(t)=\hat{u}(x^*(t),p^*(t))$ is the optimal control for Problem \eqref{2problem}.
\end{theorem}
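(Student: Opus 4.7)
The plan is to leverage the equivalence in Proposition \ref{2 key proposition}, which identifies the discrete optimality system \eqref{2 ad1}--\eqref{2 ad4} with the $s$-stage SPRK integration of the autonomous Hamiltonian system \eqref{2 cono4}--\eqref{2 cono5}. Because the underlying RK scheme has order $r$ in OCP by hypothesis, the partitioned scheme with coefficients $(a_{ij}, b_i, \bar{a}_{ij}, \bar{b}_i = b_i)$ has classical ODE order $r$ by definition; hence the whole theorem reduces to a standard convergence statement for a partitioned RK method applied to a smooth autonomous ODE, plus a transfer of the error from $(x,p)$ to $u$.

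First I would verify that the implicit function $\hat{u}(x,p)$ defined by \eqref{2 cono3} is well-defined and $C^1$ in a neighborhood of the exact curve $\{(x^*(t),p^*(t))\}$. This follows from the implicit function theorem applied to $D_u^T f(x,u)p + D_u C(x,u) = 0$, using the coercivity/second-order sufficient condition assumed on \textbf{Problem 2.1}, which makes the Jacobian in $u$ invertible along the optimal trajectory. This guarantees both that the right-hand side of \eqref{2 cono4}--\eqref{2 cono5} is smooth and that the stage equations \eqref{2 ad2}, \eqref{2 ad4} with $u_{ki} = \hat{u}(x_{ki},p_{ki})$ have a unique solution near $(x^*(t_k),p^*(t_k))$ for $h$ small, by a Banach contraction argument on the stage map. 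Once the stage values exist, standard partitioned RK convergence theory (e.g.\ Theorem II.3.6 in \cite{Hairer2004GeometricNI}) yields
\begin{equation}\nonumber
\max_{0\le k\le N}\|x_k - x^*(t_k)\| + \|p_k - p^*(t_k)\| \le C h^r.
\end{equation}
The control estimate is then immediate from Lipschitz continuity of $\hat{u}$:
\begin{equation}\nonumber
\|\hat{u}(x_k,p_k) - u^*(t_k)\| = \|\hat{u}(x_k,p_k) - \hat{u}(x^*(t_k),p^*(t_k))\| \le L\bigl(\|x_k-x^*(t_k)\| + \|p_k-p^*(t_k)\|\bigr),
\end{equation}
which inherits the $O(h^r)$ rate.

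The main obstacle is the existence/uniqueness claim, since the discrete optimization \textbf{Problem 2.2} may in principle admit several stationary points; one must show the discrete KKT system \eqref{2 ad1}--\eqref{2 ad4} has a solution lying within $O(h^r)$ of the exact trajectory and that this solution is locally unique. The cleanest route is a shooting-type argument: view the discrete system as a nonlinear map on a Banach space of sequences $(x_k,p_k)_{k=0}^{N}$, show that the exact trajectory satisfies it up to truncation residual $O(h^{r+1})$ per step, and invoke the discrete analogue of the implicit function theorem, where invertibility of the linearization near $(x^*,p^*)$ follows from the continuous second-order sufficient condition. This step is where the coercivity hypothesis and the smallness of $h$ are genuinely used; the rest is bookkeeping combining classical RK error bounds with Lipschitz transfer to $u$.
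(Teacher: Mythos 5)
The paper does not prove this theorem at all: it is quoted verbatim from Hager's work \cite{Hager2000RungeKuttaMI} (``By doing so, Hager proved the following theorem''), so there is no in-paper argument to compare against. Judged against the actual proof in the cited source, your sketch has the right skeleton --- reduce to the SPRK discretization of \eqref{2 cono4}--\eqref{2 cono5} via Proposition \ref{2 key proposition}, use the order-$r$-in-OCP definition, establish smoothness of $\hat{u}$ by the implicit function theorem under coercivity, and transfer the $(x,p)$ error to $u$ by Lipschitz continuity.

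The one step that would fail as written is the appeal to ``standard partitioned RK convergence theory (e.g.\ Theorem II.3.6 in \cite{Hairer2004GeometricNI})'' to get the $O(h^r)$ bound once the stage values exist. That theory is for initial value problems, where the error is propagated forward step by step with a known starting value. Here the discrete system \eqref{2 ad1}--\eqref{2 ad4} is a two-point boundary value problem ($x_0=a$ given at $k=0$, $p_N=\Phi'(x_N)$ imposed at $k=N$), so one cannot march forward: neither $p_0$ nor $x_N$ is known a priori, and the error bound is not a consequence of one-step stability plus local truncation error. In Hager's proof the existence, local uniqueness, \emph{and} the $O(h^r)$ estimate all come out of a single abstract implicit-function-theorem argument: the exact trajectory satisfies the discrete system up to an $O(h^r)$ residual in a suitable sequence norm, and the linearized discrete two-point BVP has a uniformly bounded inverse, which is exactly where the coercivity (second-order sufficiency) hypothesis enters. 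Your final paragraph describes precisely this argument, so the sketch is salvageable, but you should recognize that it is not an add-on to handle uniqueness --- it is the mechanism that produces the error estimate itself, and the middle paragraph's IVP-style convergence step should be deleted rather than supplemented.
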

\par
The error estimate above is not about internal-stage controls $||u_{ki}-u^*(t_k+c_ih)||$ $(1\le i \le s)$ but about node control $||\hat{u}(x_k,p_k)-u^*(t_k)||$. As remarked and exemplified in \cite{Hager2000RungeKuttaMI}, the node control calculated by $u_k=\hat{u}(x_k,p_k)$ usually converges faster than the internal-stage controls  $``for\ Runge-Kutta\ schemes\ of\ third\ or\ fourth\ order"$. Indeed, as we will clarify later, such \textit{internal-stage control problem} is so common that some RK methods with second order cannot avoid it. Let us first give the following definition.
\begin{definition}\label{3definition}
	In the setting of Theorem \ref{3Hager}, the $i$-th $(i\in \left \{ 1,...,s \right \} )$ internal-stage control $u_{ki}$ has order $n$, if for all sufficiently regular Problems \eqref{2problem}, the error satisfies
	\begin{equation}\nonumber
		\max_{0\le k\le N-1}\left \{ ||u_{ki}-u^*(t_k+c_ih)|| \right \}    \le O(h^n).
	\end{equation}
\end{definition} 
As a main result of this section, we give the following order conditions for internal-stage controls and the proof is left to the end of this section.
\begin{theorem}[Order conditions for internal-stage controls]\label{3internal-stage control order conditions}
	For $i \in \left \{ 1,...,s \right \}  $, suppose the RK method in Theorem \ref{3Hager}, with order $r$ in OCP, satisfies
	\begin{subequations}
		\begin{align}
			{\textstyle \sum_{j=1}^{s}}a_{ij}c_j^{l-2}=\frac{c_i^{l-1}}{l-1},\ l=2,...,q_1\le r, \label{3 condition1}\\
			{\textstyle \sum_{j=1}^{s}}\bar a_{ij}c_j^{l-2}=\frac{c_i^{l-1}}{l-1},\ l=2,...,q_2\le r, \label{3 condition2}
		\end{align}
	\end{subequations}
	then the order of the $i$-th internal-stage control $u_{ki}$ is 
	\begin{equation}\label{3n condition number}
		n=\begin{cases}
			1  & \text{ if } c_i\ne \bar c_i, \\
			\min(q_1,q_2)  & \text{ if } c_i=\bar c_i.
		\end{cases}
	\end{equation}
Conversely, to ensure $u_{ki}$ has order $n$ $(2\le n \le r)$, the conditions $c_i=\bar{c}_i$ and $\min(q_1,q_2)=n $ must be satisfied for the RK method.	
\end{theorem}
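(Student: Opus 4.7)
The plan is to exploit the equivalence from Proposition \ref{2 key proposition}: the discrete stages $x_{ki}$ and $p_{ki}$ arise as internal stages of an $s$-stage SPRK integration of the Hamiltonian canonical system \eqref{2 cono4}--\eqref{2 cono5}, so that $x_{ki}$ is a Runge--Kutta approximation of $x^*(t_k + c_i h)$ while $p_{ki}$ is a Runge--Kutta approximation of $p^*(t_k + \bar c_i h)$. Since $u_{ki} = \hat u(x_{ki}, p_{ki})$ and $u^*(t_k + c_i h) = \hat u(x^*(t_k + c_i h), p^*(t_k + c_i h))$, smoothness of $\hat u$ gives to leading order
\begin{equation*}
u_{ki} - u^*(t_k + c_i h) \approx D_x \hat u\,[x_{ki} - x^*(t_k + c_i h)] + D_p \hat u\,[p_{ki} - p^*(t_k + c_i h)],
\end{equation*}
so the task reduces to controlling these two bracketed differences.

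For the first bracket, I would begin from Hager's node estimate $x_k - x^*(t_k) = O(h^r)$ of Theorem \ref{3Hager} and carry out a standard quadrature-plus-perturbation argument on the stage equation \eqref{2 ad2}. Condition \eqref{3 condition1} says exactly that the quadrature rule determined by $(a_{ij}, c_j)$ is exact on polynomials of degree $q_1 - 2$ over $[0, c_i h]$, which, after Taylor-expanding $f(x^*(t_k + s), u^*(t_k + s))$ about $t_k$ and bounding the stage perturbations by a small-$h$ contraction argument, yields $x_{ki} - x^*(t_k + c_i h) = O(h^{q_1})$. Applying the analogous argument to the costate equation \eqref{2 ad4} with the adjoint weights $\bar a_{ij}$ and condition \eqref{3 condition2} gives $p_{ki} - p^*(t_k + \bar c_i h) = O(h^{q_2})$. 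Writing
\begin{equation*}
p_{ki} - p^*(t_k + c_i h) = \bigl[p_{ki} - p^*(t_k + \bar c_i h)\bigr] + \bigl[p^*(t_k + \bar c_i h) - p^*(t_k + c_i h)\bigr]
\end{equation*}
and Taylor-expanding the second bracket as $(\bar c_i - c_i) h (p^*)'(t_k + c_i h) + O(h^2)$ separates the two cases: if $c_i = \bar c_i$ the second bracket vanishes identically and one reads off order $\min(q_1, q_2)$, whereas if $c_i \ne \bar c_i$ the $O(h)$ term dominates and drops the order down to $1$.

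For the converse I would construct minimal counterexamples. To force $c_i = \bar c_i$ when $n \ge 2$, take a scalar linear-quadratic OCP whose exact adjoint $p^*$ has non-zero derivative at some node; the leading $(\bar c_i - c_i) h (p^*)'$ contribution is then non-vanishing and obstructs order $\ge 2$. To force $\min(q_1, q_2) \ge n$, choose polynomial test data tailored so that the first violated identity in \eqref{3 condition1} or \eqref{3 condition2} produces a non-vanishing quadrature remainder; this isolates an $O(h^{\min(q_1, q_2)})$ leading term in the stage error, hence in $u_{ki}$.

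The main obstacle I anticipate is the coupled fixed-point analysis supporting the stage estimates: the errors in $x_{ki}$, $p_{ki}$, and $u_{ki}$ propagate into one another through the nonlinear SPRK equations, and the intrinsic $O(h)$ control mismatch present at every stage with $c_j \ne \bar c_j$ must be shepherded carefully so that it never contaminates the $O(h^{q_1})$ and $O(h^{q_2})$ targets. The smoothness of $\hat u$ needed throughout rests on invertibility of the $u$-Hessian of the Hamiltonian, which is consistent with the coercivity framework underlying Theorem \ref{3Hager}.
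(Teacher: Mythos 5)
Your proposal is correct and its skeleton matches the paper's proof: pass to the SPRK integration of the canonical system via Proposition \ref{2 key proposition}, reduce the control error to the stage errors in $z=(x,p)$ through the Lipschitz continuity of $\hat u$, identify \eqref{3 condition1}--\eqref{3 condition2} as stage-order conditions, and exhibit the irreducible $(\bar c_i-c_i)h\,g(x_0,p_0)$ term when $c_i\ne\bar c_i$ (this is exactly the paper's Remark \ref{3 remark}). You differ in two techniques. First, the paper proves the stage-order result (Lemmas \ref{3lemma1} and \ref{lemma3.4}) by rooted-tree expansion, which delivers the ``if and only if'' in one stroke: necessity comes from the bushy tree $[\bullet,\dots,\bullet]$, whose condition is precisely $l\sum_j a_{ij}c_j^{l-1}=c_i^l$. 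Your quadrature-exactness-plus-contraction argument is a more elementary route to sufficiency, but your converse then requires the counterexample construction you sketch, which amounts to isolating that same bushy-tree coefficient. Second, the paper organizes the error as the local stage error relative to the exact flow launched from $z_k$ plus a Gronwall estimate propagating Hager's $O(h^r)$ node bound through that flow, whereas you fold the node error into the discrete perturbation argument; both are sound because $q_1,q_2\le r$. Finally, the coupling difficulty you flag --- an $O(h)$ mismatch at a stage $j$ with $c_j\ne\bar c_j$ potentially contaminating a stage $i$ with $c_i=\bar c_i$ --- is a genuine gap in your sketch, but the paper does not resolve it either: Lemma \ref{lemma3.4} is stated under the hypothesis $c_j=\bar c_j$ for all $j$, which is the regime covering every example the paper actually uses.
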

\par
Some necessary explanations for Theorem \ref{3internal-stage control order conditions} are given in the remarks below.
\begin{remark}\label{3remark1}
The orders of internal-stage controls are less than or equal to the order of RK method used, that is why we emphasize $n \le r$ in Theorem \ref{3internal-stage control order conditions}. 	
\end{remark}

\begin{remark}\label{3remark2}
The condition $c_i=\bar c_i$ is crucial for the $i$-th internal-stage control to attain high order. Note that A, B and C methods in Table \ref{2method A}-\ref{2method C} satisfy $c_i=\bar c_i$ for all $i \in \left \{ 1,...,s \right \}$, consequently all the internal-stage controls produced by them have at least second order. Nevertheless, for the second-order implicit trapezoidal method, as is shown in Table \ref{3 implicit trapezoidal} that $c_1=0 \ne \bar c_1=1/2$ and $c_2=1 \ne \bar c_2=1/2$, the two internal-stage controls $u_{k1}$ and $u_{k2}$ have just order 1. 
\end{remark}

\begin{remark}\label{3remark3}
The internal-stage controls may converge faster than the result in Theorem \ref{3internal-stage control order conditions} when Problem \eqref{2problem} is in some special form. For example, when the function $u=\hat{u}(x,p)$ determined by the third of canonical equation \eqref{2 cono3} is independent with the state variable $x$, namely $u=\hat{u}(p)$, then the accuracy order of $i$-th internal-stage control will be $q_2$ instead of $\min(q_1,q_2)$. The reason can be explained easily after we give the proof of Theorem \ref{3internal-stage control order conditions}. However, as we have stated in Definition \ref{3definition}, we discuss the orders of internal-stage controls in a general problem setting rather than some special cases.          
\end{remark}

\begin{table}[t]\centering \label{3 implicit trapezoidal}
		\caption{Implicit trapezoidal method $(left)$ and its adjoint part $(right)$.}
	\begin{tabular}{c|ccccc|cc}
		0 &     &     &  &  & 1/2 & 1/2 & 0   \\
		1 & 1/2 & 1/2 &  &  & 1/2 & 1/2 & 0   \\ \cline{1-3} \cline{6-8} 
		& 1/2 & 1/2 &  &  &     & 1/2 & 1/2
	\end{tabular}

\end{table}

To show the validity of Theorem \ref{3internal-stage control order conditions}, we give a simple example here.

 \begin{table}[t] \centering
 	\caption{ $q_1$ and $q_2$ of method A, B and C}
 	\label{3 table q1q2}
 	\begin{tabular}{c|cc|ccc|cccc}
 		& \multicolumn{2}{c|}{A} & \multicolumn{3}{c|}{B} & \multicolumn{4}{c}{C} \\ \hline
 		$i$   & 1          & 2         & 1      & 2     & 3     & 1   & 2   & 3   & 4   \\
 		$q_1$ & 2          & 2         & 3      & 2     & 2     & 4   & 2   & 2   & 3   \\
 		$q_2$ & 2          & 2         & 2      &2     & 3     & 3   & 2   & 2   & 4  
 	\end{tabular}
 \end{table}

\begin{table}[t] \centering 	\label{3 Max error2}
		\caption{Max error of internal controls $(\max_{k=0,...,N-1}||u^*(kh+c_ih)-u_{ki}||,\ i=1,...,s ) $ in Example \ref{3 example1}.}
	\resizebox{\textwidth}{12mm}{	\begin{tabular}{cccccccccc}
			\hline
			$h$  & $A:u_{k1}$ & $A:u_{k2}$ & $B:u_{k1}$ & $B:u_{k2}$ & $B:u_{k3}$ & $C:u_{k1}$ & $C:u_{k2}$ & $C:u_{k3}$ & $C:u_{k4}$ \\ \hline
			0.1  & 2.40E-03   & 2.48E-03   & 1.28E-03   & 9.23E-04   & 2.61E-03   & 1.40E-04   & 2.67E-04   & 3.02E-04   & 1.11E-05   \\
			0.05 & 5.94E-04   & 6.56E-04   & 4.20E-04   & 2.60E-04   & 6.42E-04   & 1.76E-05   & 7.02E-05   & 7.45E-05   & 1.55E-06   \\
			0.04 & 3.79E-04   & 4.25E-04   & 2.82E-04   & 1.70E-04   & 4.09E-04   & 9.05E-06   & 4.53E-05   & 4.75E-05   & 8.08E-07   \\
			0.02 & 9.43E-05   & 1.09E-04   & 7.67E-05   & 4.41E-05   & 1.01E-04   & 1.13E-06   & 1.15E-05   & 1.18E-05   & 1.05E-07   \\
			0.01 & 2.35E-05   & 2.74E-05   & 1.99E-05   & 1.12E-05   & 2.52E-05   & 1.42E-07   & 2.91E-06   & 2.94E-06   & 1.34E-08   \\ \hline
	\end{tabular}}
\end{table}
\begin{example}\label{3 example1}
	Consider the problem
	\begin{equation}\nonumber
		\min_{u} \int_{0}^{1} \frac{1}{2}x^2+\frac{1}{2}xu+\frac{1}{2}u^2dt,
	\end{equation}
	subject to
	\begin{equation}\nonumber
		\dot{x}=u,\ x(0)=1.
	\end{equation}
	The optimal control of this problem is given by
	\begin{equation}\nonumber
		u^*(t)=\frac{0.5e^t-1.5e^{2-t}}{0.5+1.5e^2}.
	\end{equation}
The max errors of internal-stage controls generated by A,B and C methods $($Table \ref{2method A}-\ref{2method C}$)$ are shown in Table \ref{3 Max error2}. We just discuss the result of C method here. According to Theorem \ref{3internal-stage control order conditions} and Table \ref{3 table q1q2}, the orders of four internal-stage controls  $u_{k1},\ u_{k2},\ u_{k3},\ u_{k4}$ should have the same numbers as $\min (q_1,q_2): 3,\ 2,\ 2,\ 3$, which coincides with the result in Table \ref{3 Max error2}.
\end{example}
\par
High-order explicit RK methods are advantageous in practice for saving computational cost. Thus it is necessary to study whether there exists $s$-stage explicit RK method with $s$-order in OCP can avoid \textit{internal-stage control problem} $(s=2,3,4)$. The reason why we restrict $s\le4 $ is that according to the well-known result of Butcher (see Theorem 5.1 in \cite{hairer1993solving}), explicit RK method of $s$-stage cannot attain order $s$ when $s>4$. To save computational payment, we do not concern the explicit RK method with stage number greater than the order. 
 \par 
For the second order explicit RK method with coefficients $a_{21}, b_1$ and $b_2$, to ensure the first internal-stage control $u_{k1}$ attain order 2, the following conditions must be satisfied:
\begin{equation}\nonumber
{\textstyle \sum_{i=1}^{2}}b_{i}=1,\  {\textstyle \sum_{i=1}^{2}}b_ic_i=\frac{1}{2},\ c_1=\bar{c}_1 . 	
\end{equation}
The first two conditions can ensure the RK method attain second order in OCP (see Table 1 in \cite{Hager2000RungeKuttaMI}) and the third condition can ensure $u_{k1}$ achieve second order. After some simple calculations, we can check that only A method (Table \ref{2method A}) with coefficients $a_{21}=1, b_1=b_2=\frac{1}{2}$ satisfies these conditions. In addition, since $c_2=\bar c_2$ is also satisfied for A method, we can conclude that the second internal-stage control $u_{k2}$ also has order 2.
\par
  
  \begin{table}[t]\centering \nonumber
  		\caption{3-stage explicit RK method with order 3 in OCP}
  	\label{3 explicit3}
  	\begin{tabular}{c|ccc}
  		0     &                                     &                             &                           \\
  		$c_2$ & $c_2$                               &                             &                           \\
  		1     & $\frac{3c_2-1-3c_2^2}{c_2(2-3c_2)}$ & $\frac{1-c_2}{c_2(2-3c_2)}$ &                           \\ \hline
  		& $\frac{c_2-1/3}{2c_2}$              & $\frac{1}{6c_2(1-c_2)}$     & $\frac{2-3c_2}{6(1-c_2)}$
  	\end{tabular}
  \end{table}

For the third or fourth order explicit methods, we first verify the conditions $\bar{c}_i=c_i,\ i=1,...,s$ are satisfied. 
\begin{proposition}\label{3 cc}
	For s-stage explicit RK methods with s order in OCP $(s=3,4)$, the conditions $c_i=\bar{c}_i,\ (i=1,...,s)$ hold.
\end{proposition}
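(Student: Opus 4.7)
The plan is to first reduce the claim $c_i = \bar{c}_i$ to Butcher's classical simplifying assumption $D(1)$. Starting from the SPRK relation $\bar{a}_{ij} = b_j - b_j a_{ji}/b_i$ and consistency $\sum_j b_j = 1$, one computes
\begin{equation*}
\bar{c}_i \;=\; \sum_{j=1}^{s} \bar{a}_{ij} \;=\; 1 \;-\; \frac{1}{b_i}\sum_{j=1}^{s} b_j a_{ji},
\end{equation*}
so $c_i = \bar{c}_i$ for every $i$ is equivalent to
\begin{equation*}
\sum_{i=1}^{s} b_i a_{ij} \;=\; b_j(1 - c_j), \qquad j = 1, \ldots, s,
\end{equation*}
which is exactly the $D(1)$ simplifying assumption. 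Because the method is explicit, only the indices $i>j$ contribute on the left-hand side; in particular the $j=s$ case collapses to $c_s = 1$.

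For $s=3$, the entire family of 3-stage explicit RK methods of order 3 in OCP is the one-parameter family displayed in Table \ref{3 explicit3}, with free parameter $c_2$ and $c_3 = 1$. I would simply substitute the given expressions for $a_{ij}, b_i$ into $D(1)$ for each $j=1,2,3$. For $j=2$ the identity reads $b_3 a_{32} = b_2(1-c_2)$; both sides evaluate to $1/(6c_2)$. For $j=1$ a short computation gives $b_2 a_{21} + b_3 a_{31} = (3c_2-1)/(6c_2) = b_1$, as required. For $j=3$ the identity is $0=0$, which holds because $c_3=1$.

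For $s=4$ the approach is identical in spirit but heavier algebraically. I would first exhibit the parametric family of 4-stage explicit RK methods of order 4 in OCP, a two-parameter family analogous to Table \ref{3 explicit3} with $c_4 = 1$ forced by the OCP order-4 conditions, and then verify $D(1)$ for each $j = 1,2,3$ by direct substitution (the case $j=4$ being trivial from $c_4=1$). The main obstacle is purely the bookkeeping: writing down the two-parameter rational expressions for the $a_{ij}$ and $b_i$, and simplifying three polynomial identities in the two free parameters. A preliminary step that must be handled before the direct verification is establishing that the OCP order-4 conditions force $c_4 = 1$; without this, the $j=4$ case of $D(1)$ would not collapse to a triviality, and we would have no route to $c_4 = \bar{c}_4 = 1$.
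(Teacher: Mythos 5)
Your reduction is exactly the paper's: both of you rewrite $c_i=\bar c_i$ via $\bar c_i=1-\tfrac{1}{b_i}\sum_j b_j a_{ji}$ and consistency into the simplifying assumption $\sum_j b_j a_{ji}=b_i(1-c_i)$ (Butcher's $D(1)$), and for $s=3$ both proofs verify $D(1)$ by direct substitution into the one-parameter family of Table \ref{3 explicit3} (you actually carry out the arithmetic the paper only asserts, and your values check out). The divergence is at $s=4$: the paper observes that order $4$ in OCP implies classical order $4$ and then simply cites the classical fact (Butcher, p.~178) that every $4$-stage explicit RK method of classical order $4$ automatically satisfies $D(1)$, whereas you propose to re-derive this by exhibiting ``the'' two-parameter family and substituting. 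That plan is unexecuted, and as stated it is also slightly too optimistic: Kutta's classification of $4$-stage order-$4$ explicit methods is not a single two-parameter family in $(c_2,c_3)$ but a generic family together with several degenerate one-parameter branches (e.g.\ $c_2=c_3=\tfrac12$, $c_3=0$), each of which would need its own verification unless you first show the extra OCP order conditions exclude them. The citation route buys you all of these cases at once and also delivers $c_4=1$ for free; your route, if completed, would be self-contained but must handle the case analysis and must separately establish $c_4=1$, as you correctly note. So: same key idea and same $s=3$ argument, but the $s=4$ part of your proposal is a sketch that still needs either the completed case-by-case computation or the appeal to the classical theorem that the paper uses.
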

\begin{proof}
	Since 
	\begin{equation}\nonumber
		\bar{c}_i= {\textstyle \sum_{j=1}^{s}}\bar{a}_{ij}= {\textstyle \sum_{j=1}^{s}}(b_j-\frac{b_ja_{ji}}{b_i}  ),\ i=1,...,s,
	\end{equation}
	to prove $\bar{c_i}=c_i$, it suffices to verify:
	\begin{equation}\label{3 c=c}
		b_i {\textstyle \sum_{j=1}^{s}} b_j- {\textstyle \sum_{j=1}^{s}}b_ja_{ji}=b_i - {\textstyle \sum_{j=1}^{s}}b_ja_{ji}=b_ic_i,\ i=1,...,s,
	\end{equation}
	where we have applied the first order condition for RK method: $ {\textstyle \sum_{j=1}^{s}}b_j=1 $. 
	\par
To ensure a 3-stage explicit RK attain third order in OCP, the unique form is shown in Table \ref{3 explicit3} (refer \cite{Hager2000RungeKuttaMI}). By a direct computation, we can verify that the coefficients in Table \ref{3 explicit3} satisfy \eqref{3 c=c}. For the fourth order explicit RK methods, it is presented in \cite{butcher1987numerical} (P178) that \eqref{3 c=c} is automatically satisfied.	
\end{proof}
\par
As an immediate result of Remark \ref{3remark2} and Proposition \ref{3 cc}, all internal-stage controls generated by explicit RK methods with third or fourth order in OCP have at least second order. 
\par 
For a third order explicit method, to ensure the $i$-th internal-stage control $u_{ki}$ $(i \in \{1,...,s\})$ owns third order, two additional conditions ($l=3$ in \eqref{3 condition1} and \eqref{3 condition2}) must be satisfied. However, there is only one independent coefficient $c_2$, as shown in Table \ref{3 explicit3}, thus it is impossible to avoid order reduction. A similar reasoning can be used for the fourth order explicit RK methods. Thus we can conclude that the s-order explicit RK methods with s stages cannot avoid the \textit{internal-stage control problem}. 
\par  
When we choose the internal-stage control as final result, the order reduction problem restricts the choice of RK methods. Of course, one can construct high-order implicit RK methods satisfying enough conditions in Theorem \ref{3internal-stage control order conditions}. But this issue is not concerned in this paper. Another way to solve the \textit{internal-stage control problem}, as suggested in the introduction of \cite{Hager2000RungeKuttaMI}, is taking the node control $u_k=\hat{u}(x_k,p_k)$ as final result instead of the internal-stage controls. According to the convergence analysis in Theorem \ref{3Hager}, $u_k$ have the same accuracy order as the RK method itself. This handling is suitable for more general RK discretization, and the additional computational cost, as we will show in the later sections concerning specific numerical algorithms, will not be huge.      
\par
We conclude this section with the proof of theorem \ref{3internal-stage control order conditions}. To begin with, we give two lemmas on error estimate of internal-stage values generated by RK method.
\par
\begin{lemma}\label{3lemma1}
Consider a differential equation $\dot{y}=f(y),\ y(t_0)=y_0$ with $f:\ \mathbb{R}^n\to \mathbb{R}^n$ smooth enough. Apply a $s$-stage Runge-Kutta method with step size $h$ to this equation, then for any $i \in \{1,...,s\}$, the internal-stage value $y_i$, calculated by
	\begin{equation}\nonumber
		y_i=y_0+ h{\textstyle \sum_{j=1}^{s}}a_{ij}f(y_j),
	\end{equation} 
	satisfies $||y(t_0+c_ih)-y_i||=O(h^{n})$ if and only if
	\begin{equation}\label{3condition ode1}
		{\textstyle \sum_{j=1}^{s}}a_{ij}c_j^{l-2}=\frac{c_i^{l-1}}{l-1},\ 	for\ l=2,...,n.
	\end{equation}
\end{lemma}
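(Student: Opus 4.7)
The plan is to compare the internal-stage value $y_i$ with the exact $y(t_0+c_ih)$ through the \emph{defect}
\begin{equation*}
\delta_i \;=\; y(t_0+c_ih) - y_0 - h\sum_{j=1}^s a_{ij}\, f\bigl(y(t_0+c_jh)\bigr),
\end{equation*}
which measures how well the stage equation is satisfied when the exact trajectory is inserted. Using $f(y(t))=\dot y(t)$ and Taylor expanding both $y(t_0+c_ih)$ and $\dot y(t_0+c_jh)$ around $t_0$, I would collect powers of $h$ to arrive at
\begin{equation*}
\delta_i \;=\; \sum_{l\ge 1}\frac{h^l}{(l-1)!}\,y^{(l)}(t_0)\left[\frac{c_i^l}{l} - \sum_{j=1}^s a_{ij}\, c_j^{l-1}\right].
\end{equation*}
The bracket at $l=1$ vanishes automatically because $c_i=\sum_j a_{ij}$, and the bracket at level $l\ge 2$ vanishes precisely when $\sum_j a_{ij} c_j^{l-1}=c_i^l/l$, which is the lemma's order condition rewritten with index $l+1$. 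Thus the conditions \eqref{3condition ode1} for $l=2,\ldots,n$ are equivalent to the vanishing of the brackets for $l=1,\ldots,n-1$ in the expansion of $\delta_i$, and therefore to $\delta_i = O(h^n)$.

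Having reduced the problem to an estimate on $\delta_i$, I would control the true error $e_i := y_i - y(t_0+c_ih)$ via the identity
\begin{equation*}
e_i \;=\; -\delta_i + h\sum_{j=1}^s a_{ij}\bigl[f(y_j) - f(y(t_0+c_jh))\bigr] \;=\; -\delta_i + h\sum_{j=1}^s a_{ij}\,M_{ij}\,e_j,
\end{equation*}
where $M_{ij}$ is the Jacobian of $f$ averaged along the segment between $y_j$ and $y(t_0+c_jh)$ via the mean value theorem. This is a linear system of the form $(I - hB)\vec e = -\vec\delta$, and for $h$ sufficiently small $(I-hB)^{-1}$ is bounded uniformly in $h$. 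Hence $\|e_i\| \le C \max_j \|\delta_j\| = O(h^n)$, which proves the sufficient direction.

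For the converse I would argue contrapositively. If the order condition \eqref{3condition ode1} fails at some smallest index $l_0 \le n$, that is, at level $l_0-1$ of the $\delta_i$ expansion, then for a carefully chosen smooth $f$ the Taylor coefficient $y^{(l_0-1)}(t_0)$ is nonzero, so $\delta_i$ is \emph{exactly} of order $h^{l_0-1}$. Reusing the same linear identity in the form $e_i = -\delta_i + O(h \max_j \|e_j\|)$ and noting that the perturbation term is of higher order then forces $\|e_i\|$ to be of order $h^{l_0-1}$, which exceeds the postulated $O(h^n)$, a contradiction. The main obstacle is this converse step: one must ensure the leading Taylor coefficient of $y$ at $t_0$ can indeed be prescribed nonzero while keeping the problem sufficiently regular. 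I would handle it by taking $f$ to be a polynomial with enough free parameters so that, by adjusting $y_0$ and these coefficients, any prescribed tuple $(y'(t_0),\ldots,y^{(n-1)}(t_0))$ is realized, thereby isolating each violated order condition individually.
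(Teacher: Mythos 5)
Your defect computation is correct, and the route is genuinely different from the paper's: the paper expands both $y(t_0+c_ih)$ and $y_i(h)$ in elementary differentials over rooted trees and compares $\gamma(\tau)u_i(\tau)$ with $c_i^{|\tau|}$, whereas you insert the exact solution into the stage equation and Taylor-expand in $h$. The gap is in the step that transfers the bound on $\delta_i$ to a bound on $e_i$. The identity $(I-hB)\vec e=-\vec\delta$ couples all stages, but your hypotheses control only $\delta_i$: conditions \eqref{3condition ode1} involve only the $i$-th row of $(a_{ij})$, so for $j\ne i$ you only know the trivial level $\sum_k a_{jk}=c_j$, i.e.\ $\delta_j=O(h^2)$. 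Hence $\max_j\|\delta_j\|$ is generically $O(h^2)$, not $O(h^n)$, and the inequality $\|e_i\|\le C\max_j\|\delta_j\|=O(h^n)$ is unjustified. Even the sharper recursion $e_i=-\delta_i+h\sum_j a_{ij}M_{ij}e_j$ with $e_j=O(h^2)$ yields only $e_i=O(h^{\min(n,3)})$. Past order three, the coupling term injects contributions of the form $h\,f'(y_0)\cdot(\text{leading defect of stage }j)$; these are exactly the non-bushy elementary differentials (trees such as $[[\bullet]]$) that the paper's induction tracks, and suppressing them requires order conditions on the \emph{other} rows $a_{jk}$, not just on row $i$. (Note that the paper's own induction applies its hypothesis to the index $j$ inside the sum over stages, so this inter-stage dependence is where the real content of the lemma sits; your shortcut erases it.)

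The same coupling breaks your converse whenever the first failure occurs at a level $l_0\ge 4$: there the perturbation $h\sum_j a_{ij}M_{ij}e_j$ is generically $O(h^3)$, the same order as $\delta_i$, so ``the perturbation term is of higher order'' is false and cancellation is not excluded. Prescribing the tuple $(y'(t_0),\dots,y^{(n-1)}(t_0))$ does not resolve this, because the perturbation depends on $f$ through its Jacobian and not only through the derivatives of $y$. This half is repairable: test on a quadrature problem $\dot z=g(t)$, $\dot t=1$, for which the time stages are exact, hence $f(y_j)-f(y(t_0+c_jh))=0$, $e_i=-\delta_i$ exactly, and $y^{(l)}(t_0)=g^{(l-1)}(t_0)$ is freely prescribable. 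For the sufficiency half, however, you must either assume the moment conditions for all stages up to the relevant levels (after which your stability bound is in fact cleaner than a tree induction) or expand the $e_j$ themselves to higher order, which leads you back to the rooted-tree bookkeeping the paper uses.
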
\label{lemma3.3}

\begin{proof}
The details of root tree theory in this proof can be seen in Chapter III of \cite{Hairer2004GeometricNI}, we just list some formulas when necessary. Regard the internal-stage value $y_i(h)= y_0+h{\textstyle \sum_{j=1}^{s}}f(y_j)$ as a function with variable $h$. We can express the $l$-th derivatives of $y(t)$ and $y_i(h)$ as following
	\begin{subequations}
		\begin{align}
		y^{(l)}(t_0)=& {\textstyle \sum_{|\tau |=l}^{ }}\alpha (\tau )F(\tau)(y_0), \nonumber \\
			y_i^{(l)}(0)=& {\textstyle \sum_{|\tau |=l}^{}}\gamma (\tau )u_i(\tau )\alpha (\tau )F(\tau )(y_0), \nonumber	
		\end{align}
 	\end{subequations}
where $F(\tau)$ is the elementary differential about tree $\tau$, $\alpha(\tau)$ and $\gamma(\tau)$ are the integer coefficients, and $u_i(\tau)$ is the factor about $a_{ij}$. According to the Taylor expansion, $||y(t_0+c_ih)-y_i||=O(h^{n})$ if and only if $\gamma(\tau)u_i(\tau)=c_i^{l}$ for each $\tau$ with tree order $1 \le|\tau|=l\le n-1$. Specially, when we choose the tree $\tau=[\tau _1,...,\tau _{l-1}]$ where $\tau_1=...=\tau_{l-1}=\bullet$, according to the formulas (1.14) and (1.15) in 	Chapter III of \cite{Hairer2004GeometricNI}:
\begin{subequations}
	\begin{align}
		\gamma (\tau )&=|\tau|\gamma (\tau _1)...\gamma (\tau _{l-1}), \nonumber \\
		u_i(\tau )&= {\textstyle \sum_{j=1}^{s}}a_{ij}u_j(\tau_1 )...u_j(\tau _{l-1}),
		 \nonumber
	\end{align}
\end{subequations}
and $\gamma(\bullet)=1,\ u_j(\bullet)= {\textstyle \sum_{k=1}^{s}}a_{jk}=c_j$, we have
\begin{subequations}
	\begin{align}
	\gamma (\tau )&=l, \nonumber \\
	u_i(\tau )&= {\textstyle \sum_{j=1}^{s}}a_{ij} c_j^{l-1},\nonumber
	\end{align}
\end{subequations}
thus the condition 
\begin{equation}\nonumber
	l \cdot {\textstyle \sum_{j=1}^{s}}a_{ij}c_j^{l-1}=c_i^l 
\end{equation}
must be satisfied for $ 1\le l\le n-1$, which proves the necessity. 
\par
Sufficiency can be proved by induction. For $|\tau|=1$, $\tau $ can only be $\bullet$, thus the equality $\gamma(\tau)u_i(\tau)=c_i$ is just the condition \eqref{3condition ode1} with $l=2$. Suppose $\gamma(\tau)u_i(\tau)=c_i^{|\tau|}$ is satisfied for $|\tau|=1,2,...,N<n-1$. For $\tau=[\tau_1,...,\tau_m]$ with $|\tau|=1+|\tau_1|+...+|\tau_m|=N+1$, we have $|\tau_w|\le N$ for $w=1,...,m$. Then
	\begin{equation}\label{3.4}
		\begin{aligned}
			\gamma (\tau )u_i(\tau )&=|\tau | {\textstyle \sum_{j=1}^{s}}a_{ij} {\textstyle \prod_{w= 1}^{m}} \gamma (\tau_w)u_j(\tau _w)\\
			& =|\tau| {\textstyle \sum_{j=1}^{s}}a_{ij}c_j^{|\tau _1|+...+|\tau _m|}\\
			& =(N+1) {\textstyle \sum_{j=1}^{s}}a_{ij}c_j^N=c_i^{N+1},
		\end{aligned}
	\end{equation} 
where the last equality is due to the condition \eqref{3condition ode1} with $l=N+2$. Now we have proven the sufficient part of the lemma.
\end{proof}
The analogous result for partitioned Runge-Kutta methods requires only minor modifications. The proof is quite similar to Lemma \ref{3lemma1} and so is omitted.
\begin{lemma}\label{lemma3.4}
Consider differential equations 
	\begin{equation}\nonumber
		\begin{aligned}
			\dot{x}&=f(x,y),\ x(t_0)=x_0,\\
			\dot{y}&=g(x,y),\ y(t_0)=y_0
		\end{aligned}
	\end{equation}
where $f$ and $g:\mathbb{R}^n \times \mathbb{R}^n\to \mathbb{R}^n$ are smooth enough. When a partitioned RK method with coefficients $a_{ij},\ b_i,\ \bar{a}_{ij},\ \bar{b}_i$ satisfying $c_i= {\textstyle \sum_{j=1}^{s}}a_{ij}=\bar c_i= {\textstyle \sum_{j=1}^{s}}\bar a_{ij} $ is applied to numerical integration, the intermediate values 
\begin{subequations}
	\begin{align}
	x_{i}&=x_0+h {\textstyle \sum_{j=1}^{s}}a_{ij}f(x_{j},y_{j}), \nonumber \\
y_{i}&=y_0+h {\textstyle \sum_{j=1}^{s}}\bar{a}_{ij}g(x_{j},y_{j}) \nonumber
	\end{align}
\end{subequations}
satisfy $||x(t_0+c_ih)-x_{i}||\le O(h^{q_1} )$ and $||y(t_0+c_ih)-y_{i}||\le O(h^{q_2})$ if and only if 
\begin{subequations}
	\begin{align}
		{\textstyle \sum_{j=1}^{s}}a_{ij}c_j^{l-2}=\frac{c_i^{l-1}}{l-1},\ l=2,...,q_1, \nonumber\\
		{\textstyle \sum_{j=1}^{s}}\bar a_{ij}c_j^{l-2}=\frac{c_i^{l-1}}{l-1},\ l=2,...,q_2. \nonumber
	\end{align}
\end{subequations}	
\end{lemma}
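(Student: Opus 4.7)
The plan is to mirror the proof of Lemma \ref{3lemma1} but work with bi-colored rooted trees (P-trees) that naturally describe the B-series of a partitioned Runge-Kutta method. Writing $z=(x,y)$ and $F=(f,g)$ so that $\dot z=F(z)$, one develops $x(t_0+c_ih)$ and $y(t_0+c_ih)$ as power series in $h$ whose coefficients are elementary differentials indexed by P-trees with root color $x$ (respectively $y$), while the internal stages $x_i$ and $y_i$ admit analogous series whose coefficients carry an extra elementary weight $u_i^{(x)}(\tau)$ or $u_i^{(y)}(\tau)$ built recursively from the $a_{ij}$ or the $\bar a_{ij}$ according to the color of the parent node.

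First I would write down these P-tree recursions explicitly: if $\tau=[\tau_1,\dots,\tau_m]_x$ has an $x$-root and subtrees $\tau_w$, then $u_i^{(x)}(\tau)=\sum_j a_{ij}\prod_{w=1}^m u_j^{(\cdot)}(\tau_w)$, with $\bar a_{ij}$ replacing $a_{ij}$ for a $y$-root. The integer factor $\gamma(\tau)=|\tau|\prod_w\gamma(\tau_w)$ is the same as in the unpartitioned case. Matching Taylor expansions then yields $\|x(t_0+c_ih)-x_i\|=O(h^{q_1})$ iff $\gamma(\tau)u_i^{(x)}(\tau)=c_i^{|\tau|}$ for every $x$-rooted P-tree with $|\tau|\le q_1-1$, and analogously with $\bar a_{ij}$, $c_i$ and $q_2$ for the $y$-component.

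Next I would prove necessity by specializing to the tall trees $\tau=[\bullet,\dots,\bullet]_x$ having $l-1$ leaves. Using the assumption $c_i=\bar c_i$, each leaf contributes the same factor $u_j(\bullet)=c_j$ regardless of its color, so the condition $\gamma(\tau)u_i^{(x)}(\tau)=c_i^{l}$ collapses to
\begin{equation*}
l\,\sum_{j=1}^s a_{ij}c_j^{l-1}=c_i^l,\qquad l=1,\dots,q_1-1,
\end{equation*}
which is exactly $\sum_j a_{ij}c_j^{l-2}=c_i^{l-1}/(l-1)$ for $l=2,\dots,q_1$, and symmetrically for $\bar a_{ij}$. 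Sufficiency follows by induction on $|\tau|$ exactly as in the display \eqref{3.4}: for $\tau=[\tau_1,\dots,\tau_m]_x$ with $|\tau|=N+1$ one writes
\begin{equation*}
\gamma(\tau)u_i^{(x)}(\tau)=|\tau|\sum_{j=1}^s a_{ij}\prod_{w=1}^m \gamma(\tau_w)u_j^{(\cdot)}(\tau_w)=(N+1)\sum_{j=1}^s a_{ij}c_j^{N}=c_i^{N+1},
\end{equation*}
using the inductive hypothesis on each subtree and the tall-tree identity one order higher; the argument is color-symmetric.

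The only delicate point, and the reason $c_i=\bar c_i$ is needed, is the inductive step across colors: a subtree $\tau_w$ of an $x$-rooted tree may itself be $y$-rooted, and the inductive value of its elementary weight evaluates to $c_j^{|\tau_w|}$ (with $c_j$, not $\bar c_j$) while the recursion at the parent uses $u_j^{(y)}(\tau_w)$; these match precisely because $c_j=\bar c_j$. Apart from tracking the two colors and checking that the induction closes simultaneously on the $x$- and $y$-rooted families, the bi-colored argument introduces no new ideas beyond Lemma \ref{3lemma1}, which is why the authors omit the details.
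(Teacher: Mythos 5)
Your bi-colored (P-tree) framework is certainly the argument the authors intend by ``quite similar to Lemma \ref{3lemma1}'', and your necessity direction --- specializing to the bushy trees $[\bullet,\dots,\bullet]$ and using $c_i=\bar c_i$ so that leaves of either color contribute the same factor --- is sound. The problem is the sufficiency induction, precisely at the ``delicate point'' you name but do not actually resolve. To conclude $\|x(t_0+c_ih)-x_i\|=O(h^{q_1})$ you must verify $\gamma(\tau)u_i^{(x)}(\tau)=c_i^{|\tau|}$ for \emph{every} $x$-rooted bi-colored tree of order up to $q_1-1$, and such a tree can contain a $y$-rooted subtree $\tau_w$ of any order up to $q_1-2$. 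Your inductive step invokes $\gamma(\tau_w)u_j^{(y)}(\tau_w)=c_j^{|\tau_w|}$ for these subtrees, but the hypotheses only give the $\bar a$-conditions up to $l=q_2$, i.e. only control $y$-rooted trees of order up to $q_2-1$; the identity $c_j=\bar c_j$ repairs the order-one leaves and nothing more. Concretely, take $q_1=4$, $q_2=2$: the order-three $x$-rooted tree with elementary differential $D_yf\,(D_xg\,(f))$ has weight $\sum_{j}a_{ij}\sum_{k}\bar a_{jk}c_k$, and matching $c_i^3/6$ requires $\sum_k\bar a_{jk}c_k=c_j^2/2$, which is the $\bar a$-condition with $l=3$ and is not implied by anything assumed. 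So the ``if'' direction, as you prove it and as the lemma states it, fails whenever $q_1$ and $q_2$ differ by two or more.

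The argument does close, exactly as you wrote it, in the symmetric case: imposing both families of conditions up to a common $l=q$ is equivalent to both components having stage order $q$, and more generally the two inductions feed each other correctly as long as $|q_1-q_2|\le 1$. That covers every method in Table \ref{3 table q1q2} and is all that Theorem \ref{3internal-stage control order conditions} uses, since only $\min(q_1,q_2)$ enters there; but you should either add the hypothesis $|q_1-q_2|\le 1$ or restate the lemma for the combined stage vector with a single $q=\min(q_1,q_2)$. A smaller, inherited caveat: as in Lemma \ref{3lemma1}, the induction tacitly needs the order conditions at all stage indices $j$ with $a_{ij}\ne 0$ (recursively), not only at the fixed index $i$ appearing in the statement.
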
 

\begin{remark} \label{3 remark}
When $c_i \ne \bar{c}_i$ in Lemma \ref{3.4}, we can express the intermediate value $y_i$ by Taylor expansion
\begin{equation}\nonumber
\begin{aligned}
	y_i=&y_0+h {\textstyle \sum_{i=1}^{s}}\bar a_{ij}g(x_0,y_0)+O(h^2)\\
	=&y_0+h\bar c_ig(x_0,y_0)+O(h^2),
\end{aligned}
\end{equation}
then it is an immediate result that
\begin{equation}\nonumber
||y(t_o+c_ih)-y_i||\le h||(c_i-\bar c_i)g(x_0,y_0)||+O(h^2)=O(h).
\end{equation}
\end{remark}
Now we are in a position to prove Theorem \ref{3internal-stage control order conditions}.  
\begin{proof}
For simple notation, we denote the vector field determined by canonical equations \eqref{2 cono4} and \eqref{2 cono5} as $F(z)$ where $z(t)=(x(t),p(t))$ and the exact flow with any initial condition $z(t_0)=z_0$ as $\phi (t;t_0,z_0)$. We also denote $L$ and $L_0$ as the Lipschitz constants for $F(z)$ and $\hat{u}(z)$ in a neighbourhood of the trajectory
$\left \{ z^*(t)=(x^*(t),p^*(t))\in \mathbb{R}^{2n}: 0\le t \le t_f \right \} $ respectively. Suppose the conditions in Theorem \ref{3internal-stage control order conditions} are satisfied, then the difference between $z_{ki}=(x_{ki},p_{ki})$ and $z^*(t_k+c_ih)$ satisfies 
 \begin{equation}\label{3final1}
 	\begin{aligned}
 		||z_{ki}-z^*(t_k+c_ih)||\le &||z_{ki}-\phi(t_k+c_ih;t_k,z_k)||\\
 		&+||\phi(t_k+c_ih;t_k,z_k)-\phi(t_k+c_ih;t_k,z^*(t_k)|| \\
 		\le &O(h^n)+ ||\phi(t_k+c_ih;t_k,z_k)-\phi(t_k+c_ih;t_k,z^*(t_k)||
 \end{aligned} \end{equation}
where $n$ satisfies \eqref{3n condition number} according to the Lemma \ref{lemma3.4} and Remark \ref{3 remark}. Since 
\begin{equation}\nonumber
	\begin{aligned}
		||\phi (t;t_k,z_k)-\phi(t;t_k,&z^*(t_k))|| \\
		=&||z_k-z^*(t_k)+\int_{t_k}^{t}F(\phi (\tau;t_k,z_k) ) -F(\phi(\tau;t_k,z^*(t_k)) )d\tau||\\
		\le&||z_k-z^*(t_k)||+L\int_{t_k}^{t}|| \phi (\tau;t_k,z_k)-\phi(\tau;t_k,z^*(t_k))||d\tau
	\end{aligned}
\end{equation}
for $t_k\le t \le t_k+c_ih$, according to Gronwall's inequality of integral form, we can obtain
	\begin{equation}\label{3final2}
		\begin{aligned}
	||\phi (t_k+c_ih;t_k,z_k)-\phi(t_k+c_ih;t_k,z^*(t_k))|| \le& (1+c_ihLe^{c_ihL})||z_k-z^*(t_k)|| \\
	\le& (1+c_ihLe^{c_ihL}) O(h^r)
		\end{aligned}
	\end{equation}
where $||z_k-z^*(t_k)|| \le O(h^r)$ is due to the Theorem \ref{3Hager}. Plugging \eqref{3final2} into \eqref{3final1}, an error estimate for $z_{ki}$ is given by  
	\begin{equation}\nonumber
		\begin{aligned}
			||z_{ki}-z^*(t_k+c_ih)||
			\le (1+c_ihLe^{c_ihL})O(h^r)+O(h^{n}).
		\end{aligned}
	\end{equation}
Finally, the convergence rate of $u_{ki}$ is obtained by
 	\begin{equation}\nonumber
		||u_{ki}-u^*(t_k+c_ih)||=||\hat{u}(z_{ki})-\hat{u}(z^*(t_k+c_ih))||\le L_0||z_{ki}-z^*(t_k+c_ih)||.
	\end{equation}   
\end{proof}	
Recall that, in Remark \ref{3remark3}, we have referred a special case $u=\hat{u}(p)$ where the accuracy order of internal-stage control is determined by $q_2$ instead of $\min(q_1,q_2)$. This can be verified easily with variable $z=(x,p)$ substituted by $p$ in the proof above and note that the internal-stage costate has order $n=q_2$ according to Lemma \ref{lemma3.4}.

\section{DLQR method based on high-order RK discretization for linear quadratic problems}\label{section4} 
In this  section, we restrict the \textbf{Problem 2.1} as the following linear quadratic form
\begin{equation}\label{4objective}\nonumber
	\min_{u}J= \min_{u}\int_{0}^{t_f}( \frac{1}{2}x^{T}Qx+\frac{1}{2}u^{T}Ru)dt+ \frac{1}{2}(x(t_f))^{T}Mx(t_f)
\end{equation}
subject to
\begin{equation}\label{4state}	\nonumber
	\dot{x}=Ax+Bu,\ x(0)=a,\ t\in [0,t_f],
\end{equation}
where $Q\ge0,R>0$ and $M\ge0$ are symmetric matrices.  \par
Based on the discussion of Section \ref{section3}, we will calculate the node control to avoid the order reduction problem of internal-stage control. To achieve this goal, it is necessary to deduce the function $u=\hat{u}(x,p)$. In the present problem setting, the third canonical equation \eqref{2 cono3} is given by
\begin{equation} \nonumber
	B^Tp+Ru=0,
\end{equation} 
and thus we can obtain the linear feedback law:
\begin{equation}\label{4 optimal control law}
	\hat u(x,p)=-R^{-1}B^Tp.
\end{equation}

\subsection{Algorithm: DLQR method based on high-order RK discretization}\label{section 4.1}
Our method is divided into four steps:   
\par
\begin{enumerate}[(1)]
	\item
\textbf{Discretization and simplification}. As the starting step, we apply a $s$-stage RK method with coefficients $a_{ij},\ b_i$ to discretization, and then obtain the following optimization problem with linear constraints:
\begin{equation} \label{4d objective}
	\begin{aligned}
\min_{u_{ki}} J_d=\min_{u_{ki}}  \frac{h}{2} {\textstyle \sum_{k=0}^{N-1}}{\textstyle \sum_{i=1}^{s}}b_i (x_{ki}^TQx_{ki}+u_{ki}^TQu_{ki})+\frac{1}{2}x_N^TMx_N 
	\end{aligned}
\end{equation}
subject to
	\begin{subequations}
\begin{align}
	x_{k+1}&=x_k+h {\textstyle \sum_{i=1}^{s}}b_{i} (Ax_{ki}+Bu_{ki}), &0&\le k \le N-1, \label{4d state1} \\
	x_{ki}&=x_k+h {\textstyle \sum_{j=1}^{s}}a_{ij} (Ax_{kj}+Bu_{kj}), &0&\le k \le N-1,\ 1\le i \le s.
	\label{4d state2}   
\end{align}
	\end{subequations}
For convenience, let us introduce the notations $X_k=(x_{k1},...,x_{ks})$ and $U_k=(u_{k1},...,u_{ks})$ for the vectors composed by all the internal-stage state and control values during $[t_k,t_{k+1}]$ respectively, then we can transform the discrete cost function \eqref{4d objective} into a concise form:
\begin{equation} \label{4d obj}
J_d=\frac{1}{2} {\textstyle \sum_{k=0}^{N-1}} ( X_k^TQ_hX_k+U_k^TR_hU_k)+\frac{1}{2}x_N^TMx_N    , 
\end{equation}
where 
\begin{equation}\label{Q_h R_h}
Q_h=h\begin{bmatrix}
	b_1Q &  & 0\\
	& \ddots & \\
	0 &  &b_sQ
\end{bmatrix},\ 
R_h=h\begin{bmatrix}
	b_1R &  & 0\\
	& \ddots & \\
	0 &  &b_sR
\end{bmatrix}.
\end{equation}
Further, by introducing the notations
\begin{equation}\nonumber
Z=\begin{bmatrix}
	I_n(1) \\
	\vdots  \\
	I_n(s)
\end{bmatrix},\ 
\mathcal{A}=h\begin{bmatrix}
	a_{11}A  & \cdots  &a_{1s}A \\
	\vdots   &  & \vdots \\
	a_{s1}A & \cdots & a_{ss}A
\end{bmatrix} ,\ 
\mathcal{B}=h\begin{bmatrix}
	a_{11}B  & \cdots  &a_{1s}B \\
	\vdots   &  & \vdots \\
	a_{s1}B & \cdots & a_{ss}B
\end{bmatrix},
\end{equation}
we can rewrite the constraints \eqref{4d state2} as follows
\begin{equation}\nonumber
	X_k=Z x_k +\mathcal{A}X_k+\mathcal{B}U_{k},\ 0\le k \le N-1,
\end{equation} 
or equivalently, denoting $E=(I_{sn}-\mathcal{A})^{-1}Z$ and $F=(I_{sn}-\mathcal{A})^{-1} \mathcal{B}$, we have
\begin{equation}\label{4d Xk}
	X_k=Ex_k+FU_k,\ 0\le k\le N-1.
\end{equation}
Note that the inverse matrix of $I_{sn}-\mathcal{A}$ exists when time step $h$ is small enough. Substituting \eqref{4d Xk} into \eqref{4d state1}, we can obtain the following formula after combing like terms
\begin{equation}\label{4d xk+1}
	x_{k+1}=G x_k +H U_k,\ 0\le k \le N-1,
\end{equation}
where
\begin{subequations}
	\begin{align}
		G&=I_n+h\begin{bmatrix}
			b_1A  & \cdots &b_sA
		\end{bmatrix}E, \nonumber \\
		H&=h\begin{bmatrix}
			b_1A  & \cdots &b_sA
		\end{bmatrix}F+h\begin{bmatrix}
			b_1B  & \cdots &b_sB
		\end{bmatrix}. \nonumber
	\end{align}
\end{subequations}
\item
\textbf{Construction of feedback law.} Denoting the discrete value function as \\
$V_k(x_k)\ (0\le k \le N)$, then the discrete HJB equation is given by
\begin{equation}\label{4d HJB}
	\begin{aligned}
		V_k(x_k)=&  \min_{U_k} \{\frac{1}{2}X_k^TQ_hX_k+\frac{1}{2}U_k^TR_hU_k+V_{k+1}(x_{k+1})  \} \\
		=&\min_{U_k} \{\frac{1}{2}(Ex_k+FU_k)^TQ_h  (Ex_k+FU_k)+\frac{1}{2}U_k^TR_hU_k \\
		&+V_{k+1}(Gx_k+HU_k) \},\ 0\le k \le N-1,
	\end{aligned}
\end{equation}
and the boundary condition is $V_N(x_N)=\frac{1}{2}x_N^T M x_N$. Suppose the value function has a quadratic form
\begin{equation}\label{4Vk}
	V_k(x_k)=\frac{1}{2}x_k^T M_k x_k,\ 0\le k \le N,
\end{equation}
where $M_k$ is symmetric, then according to the boundary condition, we have $M_N=M$. Plug \eqref{4Vk} into the discrete HJB equation \eqref{4d HJB} and let the derivative of the right hand of \eqref{4d HJB} with respect to $U_k$ vanish, we can obtain
\begin{equation}\nonumber
	(F^TQ_hF+R_h+H^TM_{k+1}H)U_k+(F^TQ_hE+H^TM_{k+1}G)x_k=0, 
\end{equation}
$0\le k \le N-1$, it follows that $U_{k}$ should satisfies a linear feedback law
\begin{equation}\label{4d linear feedback law}
	U_k=L_k x_k,\ 0\le k\le N-1,
\end{equation}
where
\begin{equation} \nonumber
L_k=-(F^TQ_hF+R_h+H^TM_{k+1}H)^{-1}(F^TQ_hE+H^TM_{k+1}G).
\end{equation}
\item
\textbf{Calculation of the value function.} Plugging \eqref{4d linear feedback law} into the discrete HJB equation \eqref{4d HJB} and comparing both sides of the equality, we obtain the iteration formula for matrix $M_k$:
\begin{equation}\nonumber
M_k=(E+FL_k)^TQ_h(E+FL_k)+L_k^TR_hL_k+(G+HL_k)^TM_{k+1}(G+HL_k),
\end{equation}
$k=N-1,...,0$. With $M_N=M$ known, $M_{N-1},...,M_{0}$ can be calculated recursively. 
\item
\textbf{Calculation of the node control.} Substituting \eqref{4d linear feedback law} into \eqref{4d xk+1}, we can calculate the discrete state trajectory by
\begin{equation}\nonumber
	x_{k+1}= (G+ H L_k)x_k,\ x_0=a,\ 0\le k \le N-1.
\end{equation}  
With $M_k$ and $x_k$ known, we can obtain the discrete costates by \eqref{2 pk}:
\begin{equation}\nonumber
	p_k=(V_k)'(x_k)=M_k x_k,\ 0\le k \le N.
\end{equation}
Finally, the node controls $u_k\ (0\le k \le N)$ can be calculated by plugging $p_k$ into the formula \eqref{4 optimal control law}, namely,
 \begin{equation}\nonumber
 	u_k=-R^{-1}B^Tp_k,\ 0\le k \le N.
 \end{equation}
\end{enumerate}

\subsection{Numerical example: Linear spring oscillator}
Consider a linear oscillator under control with following state equations and initial conditions: 
\begin{equation}\nonumber
	\begin{bmatrix}
		\dot{x}_1 \\
		\dot{x}_2 
	\end{bmatrix}=
	\begin{bmatrix}
		0 &1 \\
		-1  &0
	\end{bmatrix}
	\begin{bmatrix}
		x_1\\x_2
		
	\end{bmatrix}
	+\begin{bmatrix}
		1\\0
		
	\end{bmatrix}u,\ 
\begin{bmatrix}
	x_1(0)\\
	x_2(0)
\end{bmatrix}=\begin{bmatrix}
	1\\
	1
\end{bmatrix}.
\end{equation}
 Now search for control $u(t),\ t\in [0,40],$ to minimize the cost function:
\begin{equation}\nonumber
	J=\int_{0}^{40}(\frac{1}{2} x^Tx+1.5u^2)dt +5x(1)^Tx(1), 
\end{equation}
where $x=(x_1,x_2)$.
\par
We apply explicit Euler, implicit trapezoidal and B methods in this numerical example. As Figure \ref{LQR_error} shows, DLQR method based on high order RK discretization shows remarkable advantage in accuracy compared to the low-order case. In particular, the implicit trapezoidal method, as we have demonstrated in Remark \ref{3remark2}, which produces the internal-stage controls with only first order, generates the node control with second order accuracy in this example.
\begin{figure}[H]\label{LQR_error}
	\centering
	\includegraphics[width=0.6\linewidth]{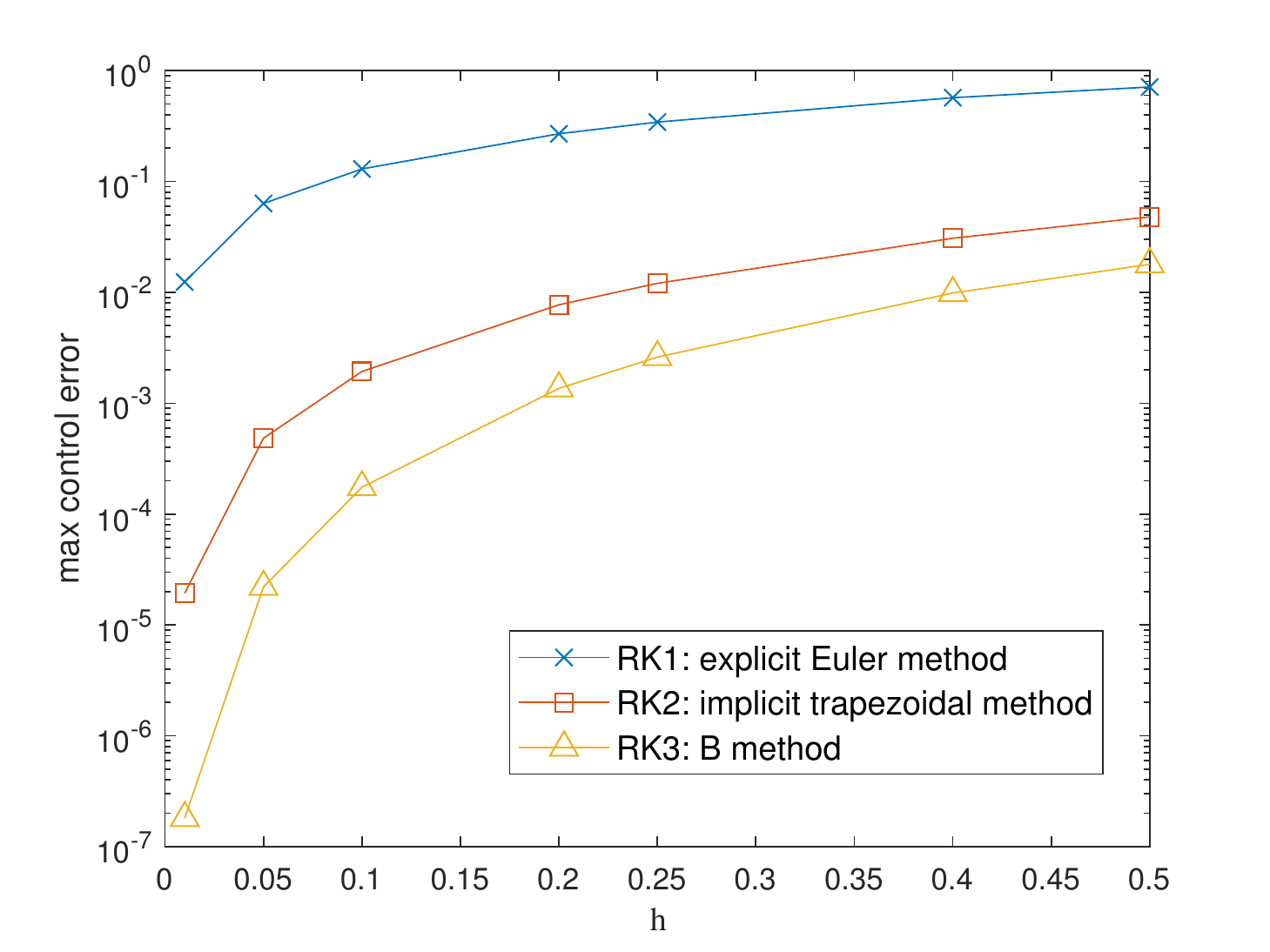}
	\caption{Comparison of different RK discretization: maximum control error $(\text{max} \{ ||u_k-u^*(t_k)||:0\le k \le N\})$ for linear spring oscillator problem.}
\end{figure}
	
\section{ILQR method based on high-order RK discretization for non-linear quadratic problems}\label{section5}
Consider the following non-linear quadratic problem:
\\
\textbf{Problem 5.1}
\begin{equation}\label{5objective}
	\min_{u}J= \min_{u}\int_{0}^{t_f}( \frac{1}{2}x^{T}Qx+\frac{1}{2}u^{T}Ru)dt+ \frac{1}{2}(x(t_f))^{T}Mx(t_f)
\end{equation}
subject to
\begin{equation}\label{5state equation}	
	\dot{x}=f(x,u),\ x(0)=a,
\end{equation}
where $Q\ge 0,\ R> 0,\ M\ge 0$ are symmetric matrices. Applying a $s$-stage RK method with coefficients $a_{ij},b_i$ to discretize \textbf{Problem 5.1}, we can get the following optimization problem:
\\
\textbf{Problem 5.2}
	\begin{equation}\label{5d objective}
	\min_{u_{ki}}J_d=
	\min_{u_{ki}} {\textstyle \sum_{k=0}^{N-1}}h {\textstyle \sum_{i=1}^{s}}b_i(\frac{1}{2}x_{ki}^TQx_{ki}+\frac{1}{2}u_{ki}^TRu_{ki})+\frac{1}{2}x_N^TMx_N      
\end{equation}
subject to
	\begin{subequations}
		\begin{align}
			x_{k+1}&=x_k+h {\textstyle \sum_{i}^{s}}b_if(x_{ki},u _{ki}),\ x_0=a, &0&\le k \le N-1,  \label{5dstate1}\\
			x_{ki}&=x_k+h {\textstyle \sum_{j}^{s}}a_{ij}f(x_{kj},u _{kj}), &0&\le k \le N-1,\ 1\le i \le s. \label{5dstate2}  
		\end{align}
	\end{subequations}
We denote the solutions of \textbf{Problem 5.2} as $u^*_{ki}$ and the corresponding discrete states calculated by \eqref{5dstate1} and \eqref{5dstate2} as $x^*_{ki}$ and $x^*_k$. Different from the linear case in Section \ref{section4}, the value function $V_k(x_k)$ for discrete cost function \eqref{5d objective} is not in a quadratic form in general, and consequently we cannot expect to get a linear feedback law as \eqref{4d linear feedback law}. Besides, note that the third canonical equation \eqref{2 cono3} of \textbf{Problem 5.1} is given by 
\begin{equation}\label{5cano 3}
	D_u^Tf(x,u)p +Ru =0,
\end{equation}
we may be not able to obtain an explicit expression for $u=\hat{u}(x,p)$.   
\par
There are many notations in this section, we list them in advance for convenience. Same as Section \ref{section4}, we also introduce the notations $U_k=(u_{k1},...,u_{ks})$ and $X_k=(x_{k1},...,x_{ks})$ for the vectors combined by internal-stage state and control values during $[t_k,t_{k+1}]$ respectively. Further, we denote the vectors composed by all the internal-stage controls, internal-stage states and node states as $U=(U_0,...,U_{N-1})$, $X=(X_0,...,X_{N-1})$ and $x_d=(x_0,...,x_N)$, respectively. When we add a symbol on these new notations, it just means adding the same symbol to all the elements in brackets. For example, $x_d^0=(x^0_0,...,x^0_N)$. In the rest of this section, we always suppose the time step $h$ in \eqref{5dstate1} and \eqref{5dstate2} is small enough to ensure that, according to implicit function theorem, $X$ and $x_d$ can be uniquely determined by $U$. Note that the total dimension of $(U,X,x_d)$ is $N_{\text{total}}=msN+nsN+n(N+1)$, thus the constraints \eqref{5dstate1} and \eqref{5dstate2} determine a submanifold $\mathcal{M}$ of Euclidean space $\mathbb{R}^{N_{\text{total}}}$: $\mathcal{M}=\left \{ (U,X,x_d)\in \mathbb{R}^{N_{\text{total}}}: \text{\eqref{5dstate1}, \eqref{5dstate2}}\right \}.$

\subsection{Algorithm: ILQR method based on high-order RK discretization}\label{section5.1}
 Before stating the algorithm formally, we give an inspiring example which is helpful to understand the iterative progress in ILQR method.
 
\begin{example}\label{5ILQR example}
Considering the following optimization problem,
\begin{equation}\label{5ex1}
		\min_{u}j(x,u)=\min _{u}\{\frac{1}{2}x^2+\frac{1}{2}u^2  \}  
\end{equation}
subject to $x=g(u)$, where $x,u \in \mathbb{R}$ and $g: \mathbb{R} \to \mathbb{R}$ is twice differentiable. 
\par
As we can see in Figure \ref{ILQR_example}, the function $x=g(u)$ determines a curve, and the minimal for objective function \eqref{5ex1} is just the point on the curve closest to the origin $(0,0)$. First, we choose an initial value $u^0$ and thus determine a point $(u^0,x^0)=(u^0,g(u^0))$ on the curve. Then we can draw the tangent of curve at point $(u^0,x^0)$ and search for minimal $(\tilde{u},\tilde{x})$ of objective function $j$ over this tangent. Once $\tilde{u}$ is determined, we obtain a search direction $\tilde{u}-u^0$ $($arrow direction in Figure \ref{ILQR_example}$)$. By mean of a linear search technique, we can obtain the next iterative point $(u^1,x^1)=(u^1,g(u^1))$ on the curve segment $	\gamma (\alpha)=\{(u(\alpha),x(\alpha )):u(\alpha )=u^0+\alpha (\tilde u-u^0),\ x(\alpha)=g(u(\alpha ))\},\ \alpha \ge 0$. 
\end{example}

\begin{figure}[H]\label{ILQR_example}
	\centering
	\includegraphics[width=0.6\linewidth]{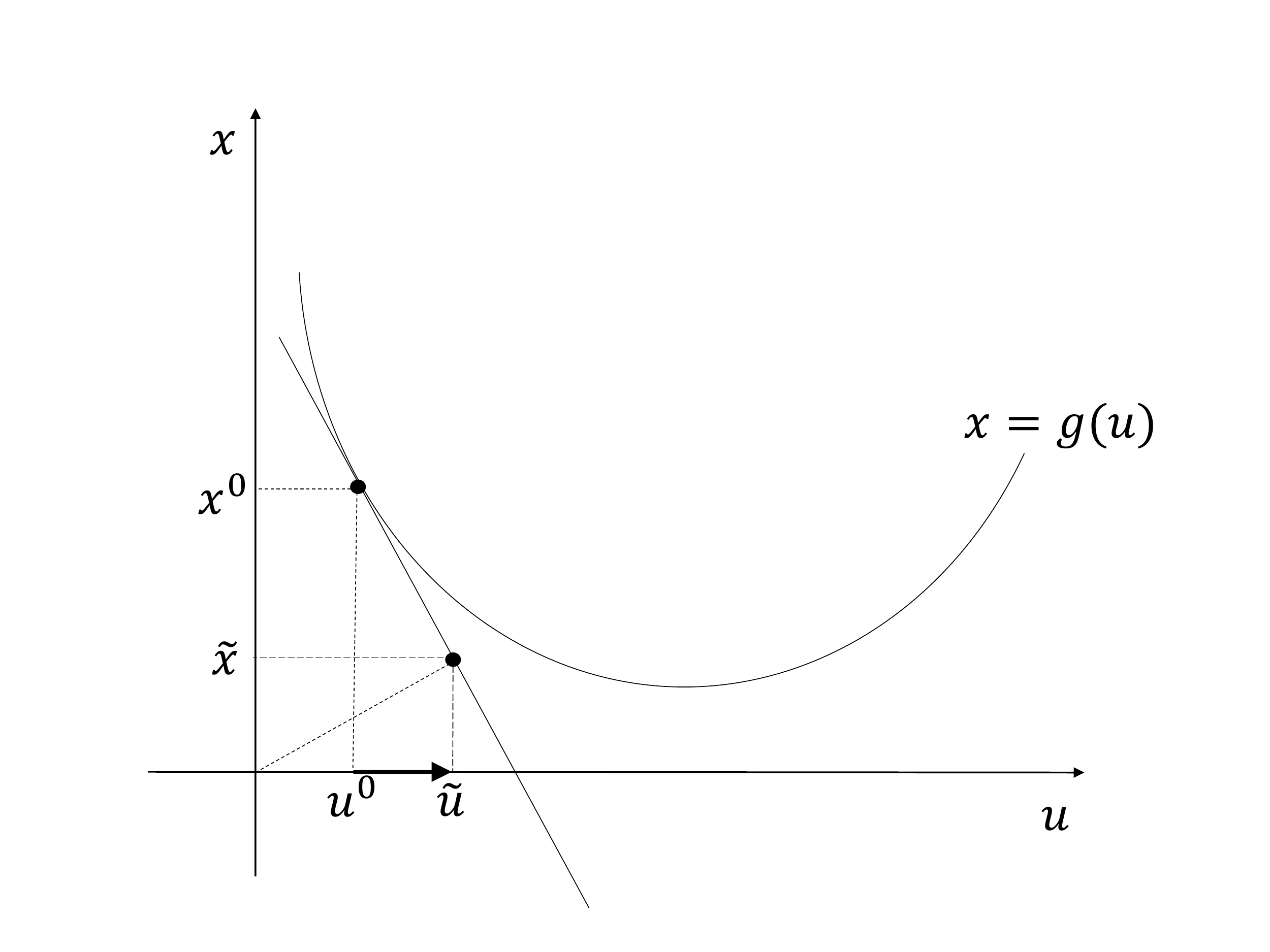}
	\caption{The search direction for Example \ref{ILQR_example}.}
\end{figure}
\par  
Now we turn to the ILQR method which can be divided into two steps. The first step aims at solving \textbf{Problem 5.2} in a similar way as Example \ref{5ILQR example}, and the node control will be calculated in the second step.
\begin{enumerate}[(1)]
	\item \textbf{Iterative progress}. Choosing an initial value for internal-stage control $U^0$, then we can determine a point $(U^0,X^0,x_d^0)$ on $\mathcal{M}$ where $X^0$ and $x_d^0$ are calculated by \eqref{5dstate1} and \eqref{5dstate2}. The equations of tangent plane of $\mathcal{M}$ at this point is given by:
\begin{subequations}
	\begin{align}
		{x}_{k+1}-x^0_{k+1}=&
		{x}_k-x^0_k+h {\textstyle \sum_{i=1}^{s}}b_i(D_xf( x^0_{ki},u^0_{ki}  )( {x}_{ki}-x^0_{ki}) \nonumber \\
		&+D_uf(x^0_{ki},u^0_{ki})({u}_{ki}-u^0_{ki}) ), \label{5tangent plane 1}\\
		{x}_{ki}-x^0_{ki}=&{x}_k-x^0_k+h {\textstyle \sum_{j=1}^{s}}a_{ij}(D_xf(x^0_{kj},u^0_{kj})({x}_{kj}-x^0_{kj})\nonumber \\		&+D_uf(x^0_{kj},u^0_{kj})({u}_{kj}-u^0_{kj}) ), \label{5tangent plane 2}
	\end{align}
\end{subequations}
$0\le k \le N-1,\ 1\le i\le s$ and $x_0^0=a$. Similar to Example \ref{5ILQR example}, we search the minimum $(\tilde{U},\tilde{X},\tilde{x}_d)$ of cost function \eqref{5d objective} over this tangent plane. The progress to obtain $\tilde{U}$ is given in Appendix \ref{App1}. With $\tilde{U}$ calculated, we get a search direction $\tilde{U}-U^0$ for \textbf{Problem 5.2}. Considering the curve segment on $\mathcal{M}$:
\begin{equation}\label{5curve segment}
	\begin{aligned}
		\gamma(\alpha)= \{ (U,X,x_d):U=U^0+\alpha (\tilde{U}-U^0),\\
		\text{$X$ and $x_d$ are calculated by \eqref{5dstate1} and \eqref{5dstate2} } 
		\} ,\ \alpha \ge 0,
	\end{aligned}
\end{equation}
we can choose the step length $\alpha$ by some linear search technique and then let $\gamma(\alpha)$ be the next iterative point $(U^1,X^1,x^1_d)$ on $\mathcal{M}$. Repeat this progress until a preset condition for interruption is satisfied, then we can take the finial iterative point (suppose it is the $L$-th point) as the approximate solution of \textbf{problem 5.2}: $(U^L,X^L,x^L_d)\approx (U^*,X^*,x_d^*)$.   
\item \textbf{Calculation of node controls}.  
According to Proposition \ref{2 key proposition}, the solution of \textbf{problem 5.2} $u^*_{ki},\ x^*_{ki}$ and $x^*_k$ will satisfy the following equations  
	\begin{subequations}
		\begin{align}
			p^*_{k+1}&=p^*_k-h {\textstyle \sum_{i=1}^{s}}b_i( D_x^Tf(x^*_{ki},u^*_{ki})p^*_{ki}+  Qx^*_{ki}  ),	\label{5ad1}\\
			p^*_{ki}&=p^*_k-h {\textstyle \sum_{i=1}^{s}}\bar{a}_{ij} ( D_x^Tf(x^*_{kj},u^*_{kj})p^*_{kj}+  Qx^*_{kj}  ),\label{5ad2}
		\end{align}
	\end{subequations} 
$0\le k\le N-1,\ 1\le i \le s $, with boundary condition $p^*_N=Mx^*_N$. Since we have obtained $u^*_{ki},\ x^*_{ki}$ and $x^*_k$ in step (1), we can calculate the discrete costates $p^*_{N-1},...,p^*_{0}$ recursively by solving \eqref{5ad1} and \eqref{5ad2}. Plugging $ x^*_k$ and $p^*_k$ into the third canonical equation \eqref{5cano 3}, we can obtain the node control $u^*_k$ by solving   
	\begin{equation}\label{5 control law}\nonumber
		D_u^Tf(x_k^*,u_k^*)p_k^*+Ru_k^*=0,\ 0\le k \le N.
	\end{equation}
	\end{enumerate}

\subsection{Convergence analysis}
In this subsection, we give a convergence analysis for the iterative progress of ILQR method. The main results contain:
\begin{enumerate}[(i)]
	\item
	The iterative progress of ILQR technique belongs to the quasi-Newton type methods and produces descent search direction for \textbf{Problem 5.2}.
	\item 
	ILQR cannot attain super-linear convergence rate in general.   
\end{enumerate} 
To begin with, let us rewritten the cost function \eqref{5d objective} in the following form: 
\begin{equation} \label{5c objective11}
\min_{U} J_d(U,X,x_d)=\min_{U}\frac{1}{2}X^T\mathcal{Q}X+\frac{1}{2}U^T\mathcal{R}U+\frac{1}{2} x_N^TMx_N,  
\end{equation}
where 
\begin{equation}\nonumber
	\mathcal{Q}=\begin{bmatrix}
		Q_h(1) &  &0 \\
		  & \ddots  &  \\
		0 &   &Q_h(N)
	\end{bmatrix},\ \mathcal{R}=\begin{bmatrix}
		R_h(1) &  &0 \\
	  & \ddots  &  \\
		0 &  &R_h(N)
	\end{bmatrix} ,
\end{equation}
 $Q_h$ and $R_h$ are the same as \eqref{Q_h R_h}.
\par
Since the internal-stage state $X$ and node states $x_k\ (k=0,...,N)$ are determined by the internal-stage control $U$ according to equations \eqref{5dstate1} and \eqref{5dstate2}, it is convenient to introduce the notations $X=F(U)$ and $x_k=F_d^k(U)\ (k=0,...,N)$. Specially, since $x_0=a$, we have $F_d^0(U)\equiv a$. Note that when we plug these functions in \eqref{5c objective11}, we can express the discrete cost function with only variable $U$, namely, 
\begin{equation}
	J_d(U)=\frac{1}{2}(F(U))^T\mathcal{Q}F(U)+\frac{1}{2}U^T\mathcal{R}U+\frac{1}{2} (F_d^N (U))^TM F_d^N (U). 
\end{equation}
For convenience of the discussion latter, we give the gradient and Hesse matrix of $J_d(U)$ at $U^0$:
\begin{align}
	(J_d)'(U^0)= &(F'(U^0))^T\mathcal{Q} F(U^0)+\mathcal{R}U^0+((F_d^N)'(U^0))^TMF_d^N(U^0) \label{5c gradient},\\	
	(J_d)''(U^0)=&(F'(U^0))^T\mathcal{Q} F'(U^0)+(F''(U^0))^T\mathcal{Q} F(U^0)+\mathcal{R} \nonumber \\
	&+((F_d^N)'(U^0))^TM (F_d^N)'(U^0)+((F_d^N)''(U^0))^TM F_d^N(U^0) \label{5c Hesse}. 
\end{align}
Next we will calculate the search direction generated by ILQR iterative progress. The tangent plane of manifold $\mathcal{M}$ at the point $(U^0,X^0,x^0_1,...,x^0_N)$ $=(U^0,F(U^0)$ $,F_d^1(U^0),...,F_d^N(U^0))$ is given by
\begin{align}
	X-X^0&=F'(U^0) (U-U^0), \label{5c 1} \\
x_k-x_k^0 &= (F_d^k)'(U^0)(U-U^0),\ 0\le k \le N. \label{5c 2}
\end{align}
Particularly, the $(N+1)$-th formula of \eqref{5c 2} is 
\begin{equation}\label{5c x_N}
x_N-x_N^0=(F_d^N)'(U^0) (U-U^0).
\end{equation}

Substituting \eqref{5c 1} and \eqref{5c x_N} into \eqref{5c objective11}, we can obtain the following expression after combing similar terms:
\begin{equation}\label{5c Jd}
\frac{1}{2}(U-U^0)^TW(U^0)(U-U^0)+(U-U^0)^TY(U^0)+C(U^0) 
\end{equation}
where
\begin{subequations}
\begin{align}
	W(U^0)=&(F'(U^0))^T\mathcal{Q} F'(U^0)+\mathcal{R}+((F_d^N)'(U^0))^TM (F_d^N)'(U^0) , \label{5c W}\\
	Y(U^0)= &(F'(U^0))^T\mathcal{Q}X^0+\mathcal{R}U^0+((F_d^N)'(U^0))^TMx_N^0 ,\label{5c Y}\\
	C(U^0)=&\frac{1}{2}(X^0)^T\mathcal{Q}X^0+\frac{1}{2}(U^0)^T\mathcal{R}U^0+\frac{1}{2}(x_N^0)^TMx_N^0 .\label{5c C}    
\end{align}	
\end{subequations}
Since $W(U^0)$ is a symmetric matrix, the necessary condition for minimizing \eqref{5c Jd} is
\begin{equation}\nonumber
	W(U^0)(U-U^0)+Y(U^0)=0,
\end{equation} 
then the search direction of ILQR can be expressed as 
\begin{equation}\label{5c iLQR direction}
	U-U^0=-( W(U^0)  )^{-1} Y(U^0)	.
\end{equation}
Comparing \eqref{5c gradient} with \eqref{5c Y}, we have $Y(U^0)=(J_d)'(U^0)$. Consequently, we can rewrite the ILQR search direction \eqref{5c iLQR direction} in a quasi-Newton type 
\begin{equation}
	U-U^0=-(W(U^0))^{-1}(J_d)'(U^0),
\end{equation}
According to \eqref{5c W}, $W(U^0)$ is positive-definite, thus the search direction generated by ILQR is a decrease one for \textbf{Problem 5.2}.
\par
Now we study whether the convergence rate of ILQR is super-linear. Suppose the sequence $\{U^l\}$, generated by ILQR iteration $U^{l+1}=U^l-W(U^l)^{-1} (J_d)'(U^l)$, converges to the minimum of \textbf{Problem 5.2} (denoted by $U^*$). According to the necessary and sufficient condition for super-linear convergence (see Theorem 3.7 in \cite{Nocedal1999NumericalO}), ILQR can attain a super-linear convergence rate if and only if 
\begin{equation}\label{5c superlinear}
	\lim_{l \to +\infty} \frac{||(W(U^l) -(J_d)''(U^l))\cdot 	(U^{l+1}-U^l) ||}{|| 	U^{l+1}-U^l||} =0.
\end{equation}
However, \eqref{5c superlinear} cannot be achieved in general. To demonstrate this, let us go back to Example \ref{5ILQR example} where $W(u^l)=1+(g'(u^l) )^2$ and $j''(u^l)=1+(g'(u^l))^2+g''(u^l)g(u^l)$. We have
\begin{equation}\nonumber
	\lim_{l \to +\infty} \frac{|(j''(u^l)-W(u^l))\cdot (u^{l+1}-u^l)|}{|u^{l+1}-u^l|} =|j''(u^*)-W(u^*)|=|g''(u^*)g(u^*) |.
\end{equation}
It is easy to cite an example in which $|g''(u^*)g(u^*) |\ne 0$. When $g(u)=u^2+1$, we have $u^*=0,\ g(u^*)=1$ and $g''(u^*)=2$.
\par

\subsection{Numerical example: inverted pendulum}
Consider the state equations of inverted pendulum under control:
\begin{subequations}
	\begin{align}
		\dot{\theta }&=\omega,\ \theta(0)=\frac{\pi}{3},  \nonumber\\
		\dot{\omega }&=sin\theta +u,\ \omega(0)=0. \nonumber   
	\end{align}
\end{subequations}	
The cost function of this problem is:
\begin{equation}\nonumber
	J=\int_{0}^{4}0.025u^2dt+2.5 x_f^Tx_f,    
\end{equation}	
where $x_f=(\theta(4),\omega(4))$.
\par
For this non-linear problem, we explore the dependence of initial control $u_0$ (the first node control value calculated by ILQR method) on the node number for different RK discretization. As we can see in Figure \ref{fig ILQR}, for the B method with third order in OCP, the calculated initial control $u_0$ converges faster than the other two produced by lower order discretization. When the node number is increased to nearly $800$, the initial control $u_0$ generated by the second order implicit trapezoidal method can attain a satisfying accuracy, while explicit Euler method still behaves badly.      
\begin{figure}[t]
	\centering
	\includegraphics[width=0.6\linewidth]{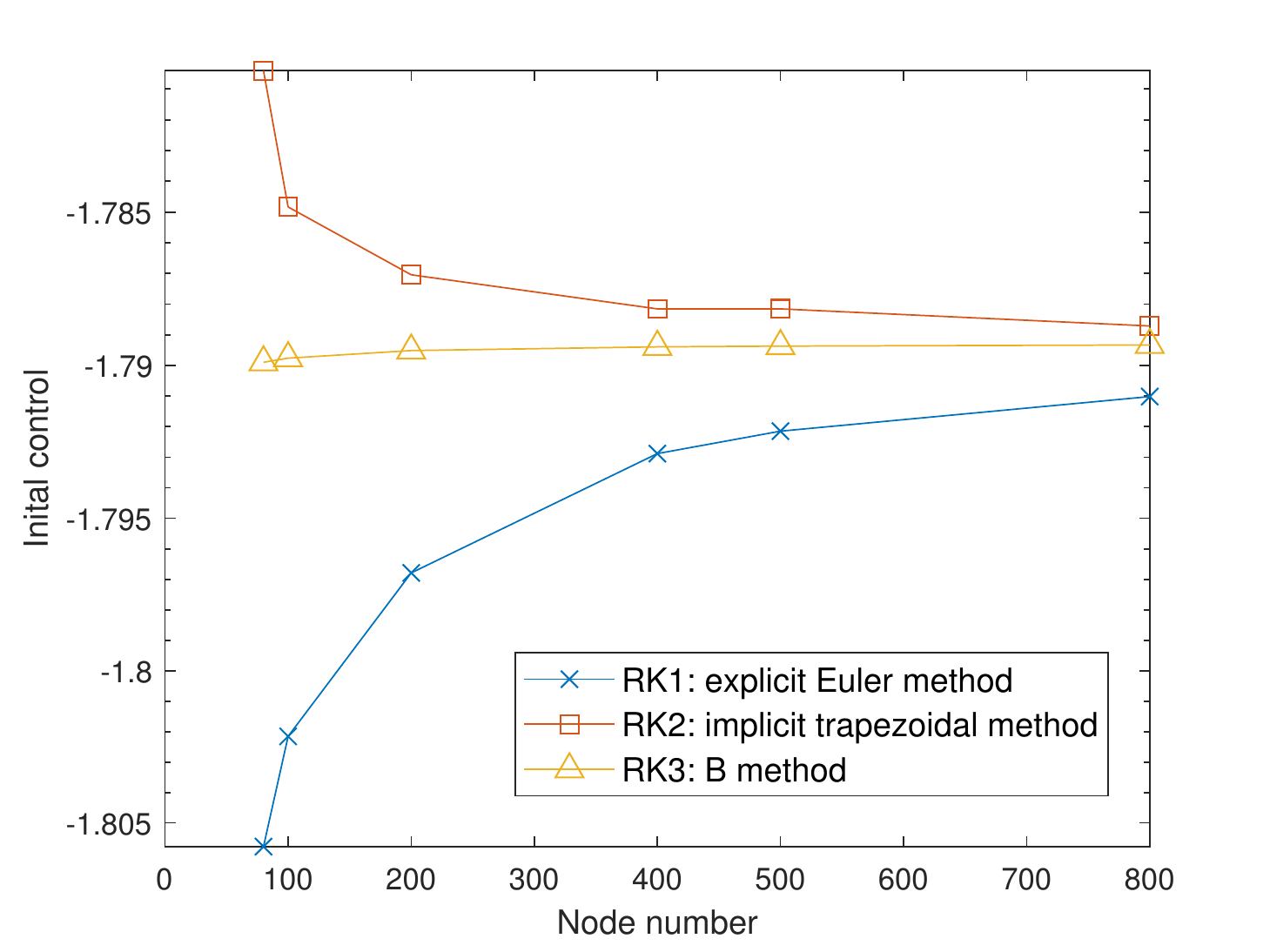}
	\caption{Comparison of different RK discretization: dependence of initial control on the node number.}
	\label{fig ILQR}
\end{figure}

\section{Conclusion}
We propose DLQR and ILQR methods based on high-order RK discretization to solve linear and nonlinear quadratic optimal control problems respectively. The equivalence between direct approach based on RK discretization and SPRK integration for canonical equations, is reconstructed in the framework of dynamic programming and plays important roles in both theoretical analysis and algorithm implementation. Based on this equivalence, we deduce the order conditions for internal-stage controls in Theorem \ref{3internal-stage control order conditions} and discover that some common RK methods with high order in OCP (third or fourth order explicit methods; implicit methods such as implicit trapezoidal method) cannot avoid the order reduction. Then, by mean of the the equivalence in Section \ref{section2}, we calculate node control in DLQR and ILQR methods to overcome this obstacle. Numerical examples show the advantage of our methods on the accuracy increase. Because both DLQR and ILQR methods are achieved with the feedback technique, they have high computational efficiency. In the convergence analysis for ILQR iteration, we demonstrate that ILQR technique is essentially a quasi-Newton method and the convergence rate is linear in general.  

\appendix
\section{Proof for proposition \ref{2 key proposition}}\label{App for section2}
Denoting the gradient of discrete value function $(V_{k})'(x_k)$ as $p_k$ $(0\le k\le N)$, we have $p_N=(V_{N})'(x_N)=\Phi'(x_N)$ according to the boundary condition $V_N(x_N)=\Phi(x_N)$. To achieve the minimization of the right hand of discrete HJB equation \eqref{2d HJB}, we first introduce a multiply $\varphi_{ki}$ for the $i$-th intermediate equation of \eqref{2dstate1} $(1\le i \le s)$ and define
\begin{equation}\nonumber
	\begin{aligned}
		\mathcal{V}_k&(x_k,x_{k1},...,x_{ks},u_{k1},...,u_{ks},\varphi _{k1},...,\varphi _{ks}) \\
		=&h {\textstyle \sum_{i=1}^{s}}b_iC(x_{ki},u_{ki})+V_{k+1}(x_k+ h{\textstyle \sum_{i=1}^{s}} b_if(x_{ki},u_{ki}))\\
		&+{\textstyle \sum_{i=1}^{s}} \varphi _{ki}^T(x_k+h {\textstyle \sum_{j=1}^{s}}a_{ij}f(x_{kj},u_{kj})-x_{ki}) ,
	\end{aligned}
\end{equation}
then the necessary conditions for minimization state that the partial derivative of $\mathcal{V}_k$ with respect to $x_{ki}$, $u_{ki}$ and $\varphi _{ki}$ $(i=1,...,s)$ should vanish. Specifically, we have,
\begin{subequations}
	\begin{align}
		&D_{x_{ki}} \mathcal{V}_k(x_k,x_{k1},...,x_{ks},u_{k1},...,u_{ks},\varphi _{k1},...,\varphi _{ks})\nonumber \\
		&=
		hb_i [D_xC(x_{ki},u_{ki})+D_x^Tf(x_{k_i},u_{ki})(p_{k+1}+ {\textstyle \sum_{j=1}^{s}}\frac{a_{ji}}{b_i}\varphi_{kj})]-\varphi_{ki} =0 ,
		\label{2H1} \\
		&D_{u_{ki}} \mathcal{V}_k(x_k,x_{k1},...,x_{ks},u_{k1},...,u_{ks},\varphi _{k1},...,\varphi _{ks})
		\nonumber	
		\\
		&=hb_i [D_uC(x_{ki},u_{ki})+D_u^Tf(x_{k_i},u_{ki})(p_{k+1}+ {\textstyle \sum_{j=1}^{s}}\frac{a_{ji}}{b_i}\varphi_{kj})]=0 ,  \label{2H2} \\
		&D_{\varphi_{ki}} \mathcal{V}_k(x_k,x_{k1},...,x_{ks},u_{k1},...,u_{ks},\varphi _{k1},...,\varphi _{ks})
		\nonumber	
		\\
		&=x_k+h {\textstyle \sum_{j=1}^{s}}a_{ij}f(x_{kj},u_{kj})-x_{ki}=0 \label{2H3}
	\end{align}
\end{subequations}
for all $1\le i \le s$. The discrete HJB equation \eqref{2d HJB} can now be rewritten as
\begin{equation} \label{A1 dHJB}
	\begin{aligned}
	V_k(x_k)=&\mathcal{V}_k(x_k,x_{k1},...,x_{ks},u_{k1},...,u_{ks},\varphi _{k1},...,\varphi _{ks}) \\
=&h {\textstyle \sum_{i=1}^{s}}b_iC(x_{ki},u_{ki})+V_{k+1}(x_k+ h{\textstyle \sum_{i=1}^{s}} b_if(x_{ki},u_{ki}))\\
&+{\textstyle \sum_{i=1}^{s}} \varphi _{ki}^T(x_k+h {\textstyle \sum_{j=1}^{s}}a_{ij}f(x_{kj},u_{kj})-x_{ki})
	\end{aligned}
\end{equation} 
in which $x_{ki},\ u_{ki}$ and $\varphi _{ki}$ $(1\le i\le s)$ are functions with variable $x_k$ implicitly determined by \eqref{2H1}, \eqref{2H2} and \eqref{2H3}. Derivate both side of \eqref{A1 dHJB} with respect to $x_k$ and note that the partial derivatives of $\mathcal{V}$ with respect to $x_{ki}$, $u_{ki}$, $\varphi_{ki}$ all vanish, we can obtain:
\begin{equation}\label{pk pk+1}
	p_k= p_{k+1} + {\textstyle \sum_{i=1}^{s}}\varphi _{ki} .
\end{equation}   
By introducing the variables
\begin{equation}\label{2pki}
	p_{ki} \triangleq p_{k+1}+ {\textstyle \sum_{j=1}^{s}}\frac{a_{ji}}{b_i}\varphi_{kj},\ 1\le i \le s,
\end{equation}
we can rewrite the conditions \eqref{2H1} and \eqref{2H2} as 
\begin{subequations}
	\begin{align}
		&hb_i (D_xC(x_{ki},u_{ki})+D_x^Tf(x_{ki},u_{ki}) p_{ki})=\varphi_{ki}, \label{2H4}
		\\
		&hb_i (D_uC(x_{ki},u_{ki})+D_u^Tf(x_{ki},u_{ki}) p_{ki})=0.  \label{2H5}
	\end{align}
\end{subequations} 
Since $h>0,\ b_i>0$, \eqref{2H5} is equivalent to the following formula
\begin{equation}\nonumber
D_uC(x_{ki},u_{ki})+D_u^Tf(x_{ki},u_{ki}) p_{ki}=0
\end{equation}
which has the same form as the third canonical equation \eqref{2 cono3}, thus we have $u_{ki}=\hat{u}(x_{ki},p_{ki})$. Substituting \eqref{2H4} into \eqref{pk pk+1}, we can obtain
\begin{equation}\label{2db1}
	p_k=p_{k+1}+h {\textstyle \sum_{k=1}^{s}}b_i(D_xC(x_{ki},u_{ki})+D_x^Tf(x_{ki},u_{ki})p_{ki}).
\end{equation}
Plugging \eqref{2H4} and \eqref{2db1} into \eqref{2pki}, we can get the expression of $p_{ki}$, namely,
\begin{equation} \nonumber
	p_{ki}=p_k-h {\textstyle \sum_{j=1}^{s}}\bar{a}_{ij}[D_x^Tf(x_{kj},u_{kj})p_{kj}+D_xC(x_{kj},u_{kj})] ,\ 1\le i \le s,
\end{equation}
where $\bar{a}_{ij}=b_j-\frac{b_ja_{ji}}{b_i}$.
\par
Now we have proven that the necessary condition for \textbf{Problem 2.2} is given by the equations \eqref{2 ad1}-\eqref{2 ad4} in Proposition \ref{2 key proposition}. The existence and uniqueness of solution can be ensured when time step $h$ is small enough (more details can be seen in \cite{Hager2000RungeKuttaMI}).

\section{Calculation of search direction in ILQR method}\label{App1}
The search direction $\tilde U -U^0$ of ILQR in Subsection \ref{section5.1} can be calculated by solving the following optimization problem in which the minimum is $\tilde U$. 
\begin{equation}\nonumber
	\min_{U_k}  {\textstyle \sum_{k=0}^{N-1}}(\frac{1}{2}X_k^TQ_hX_k+ 
	\frac{1}{2} U_k^TR_hU_k    )+\frac{1}{2}x_N^TMx_N
\end{equation}
subject to
\begin{subequations}
	\begin{align}
x_{k+1}-x^0_{k+1}&=x_k-x^0_k+B_k(X_k-X_k^0)+C_k(U_k-U_k^0), \label{A1}\\
X_{k}-X^0_{k}&=Z(x_k-x_k^0)+A^1_k(X_k-X_k^0)+A^2_k(U_k-U_k^0), \label{A2}
	\end{align}
\end{subequations}
where $Q_h$ and $R_h$ have the same definitions as in \eqref{4d obj}, and
\begin{subequations}
	\begin{align}
A^1_k&=h\begin{bmatrix}
	a_{11}D_xf(x^0_{k1},u^0_{k1})& \cdots &a_{1s}D_xf(x^0_{ks},u^0_{ks}) \\
	\vdots &  &\vdots \\
	a_{s1}D_xf(x^0_{k1},u^0_{k1}) & \cdots &a_{ss}D_xf(x^0_{ks},u^0_{ks})
\end{bmatrix}, \nonumber \\
	A^2_k&=h\begin{bmatrix}
	a_{11}D_uf(x^0_{k1},u^0_{k1})& \cdots &a_{1s}D_uf(x^0_{ks},u^0_{ks}) \\
	\vdots &  &\vdots \\
	a_{s1}D_uf(x^0_{k1},u^0_{k1}) & ... &a_{ss}D_uf(x^0_{ks},u^0_{ks})
\end{bmatrix},
Z=\begin{bmatrix}
	I_n (1)\\
	\vdots \\
	I_n(s)
\end{bmatrix}, \nonumber	
	\end{align}
\end{subequations}

\begin{equation}\nonumber
	\begin{aligned}
	B_k&=h\begin{bmatrix}
	b_1D_xf(x^0_{k1},u^0_{k1}) &...  &b_sD_xf(x^0_{ks},u^0_{ks}) 
\end{bmatrix}, \\
C_k&=h\begin{bmatrix}
	b_1D_uf(x^0_{k1},u^0_{k1}) &...  &b_sD_uf(x^0_{ks},u^0_{ks}) 
\end{bmatrix}.
	\end{aligned}
\end{equation}
We mention that the constraints \eqref{A1} and \eqref{A2} are the same as the tangent plane equations \eqref{5tangent plane 1} and \eqref{5tangent plane 2}. After some simple operation, we can transform the constraints \eqref{A2} and \eqref{A1} into the form
\begin{subequations}
	\begin{align}
	X_k=E_k x_k +F_k U_k +D_k^1, \label{A3} \\
	x_{k+1}= G_k x_k +H_k U_k +D_k^2, \label{A4}
	\end{align}
\end{subequations} 
where
\begin{subequations}
\begin{align}
	E_k&=(I_{sn}-A_k^1)^{-1}Z, &F_k&=(I_{sn}-A_k^1)^{-1}A_k^2, \nonumber \\
	G_k&=I_n+B_kE_k,\ &H_k&=B_kF_k+C_k,\nonumber\\
	D_k^1&=X_k^0-E_kx_k^0-F_kU_k^0,\ &D_k^2&=x_{k+1}^0-G_kx_k^0-H_kU_k^0.\nonumber
\end{align}
\end{subequations}
Now we can write the discrete HJB equation as following
\begin{equation} \label{A HJB}
	\begin{aligned}
V_k(x_k)=&\min_{U_k}\{\frac{1}{2}X_k^TQ_hX_k+\frac{1}{2}U_k^TR_hU_k+V_{k+1}(x_{k+1})\} \\
=  &\min_{U_k} \{ \frac{1}{2}(E_kx_k+F_kU_k+D_k^1)^TQ_h (E_kx_k+F_kU_k+D_k^1)\\
&+\frac{1}{2}U_k^TR_hU_k+V_{k+1}(G_kx_k+H_kU_k+D_k^2)\},\ 0\le k \le N-1,
	\end{aligned}
\end{equation}
with boundary condition $V_N(x_N)=\frac{1}{2}x_N^TMx_N$. Then, we suppose $V_k(x_k)$ has a quadratic form
\begin{equation}\label{A vk}
	V_k(x_k)=\frac{1}{2}x_k^TM_kx_k+Y_k^Tx_k+c_k,\ 0\le k \le N, 
\end{equation}
where $M_k$ is symmetric. According to the boundary condition, we have $M_N=M$, $Y_N=0$, $c_N=0$. Substituting \eqref{A vk} into the discrete HJB equation \eqref{A HJB} and let the derivative of the right hand of equality with respect to $U_k$ vanish, we can get the following linear feedback law after combing like terms and matrix inversion
\begin{equation}\label{A feedback law}
	U_k=U_k^1x_k+U_k^2
\end{equation}
where
\begin{subequations}
	\begin{align}
U_k^1&=-(F_k^TQ_hF_k+R_h+H_k^TM_{k+1}H_k)^{-1}(F_k^TQ_hE_k+H_k^TM_{k+1}G_k), \label{A uk1} \\
U_k^2&=-(F_k^TQ_hF_k+R_h+H_k^TM_{k+1}H_k)^{-1}(F_k^TQ_hD_k^1+H_k^TM_{k+1}D_k^2+H_k^TY_{k+1}),\label{A uk2}
	\end{align}
\end{subequations}
$0\le k \le N-1$. Plugging \eqref{A feedback law} into the discrete HJB equation \eqref{A HJB}, we can obtain the following iterative formulas for $M_k$ and $Y_k$ by comparing both sides of equality:
\begin{subequations}
\begin{align}
	M_k=&(E_k+F_kU_k^1)^TQ_h(E_k+F_kU_k^1)+(U_k^1)^TR_hU_k^1
	\nonumber \\
	&+(G_k+H_kU_k^1)^TM_{k+1}(G_k+H_kU_k^1), \nonumber \\
	Y_k=&(E_k+F_kU_k^1)^TQ_h(F_kU_k^2+D_k^1)+(U_k^1)^TR_hU_k^2
	\nonumber \\
	&+(G_k+H_kU_k^1)^TM_{k+1}(H_kU_k^2+D_k^2)+(G_k+H_kU_k^1)^TY_{k+1}, \nonumber
\end{align}
\end{subequations}   
$0\le k \le N-1$. With $M_N$ and $Y_N$ known, we can calculate $M_k$ and $Y_k$ $(k=N-1,...,0)$ recursively. Then according to \eqref{A uk1} and \eqref{A uk2}, we can obtain the coefficient matrices $U_k^1$ and $U_k^2$. Plugging the feedback law \eqref{A feedback law} into \eqref{A4}, the discrete state trajectory can be obtained by 
\begin{equation}\nonumber
	x_{k+1}=(G_k+H_kU_k^1)x_k+H_kU_k^2+D_k^2,\ x_0=a,\ 0\le k\le N-1.
\end{equation}
At last, the internal stage control $U_k$ can be calculated by substituting $x_k$ into \eqref{A feedback law}.

\bibliographystyle{siamplain}
\bibliography{references}
\end{document}